\let\frak\mathfrak
\def\>{\relax\ifmmode\mskip.666667\thinmuskip\relax\else\kern.111111em\fi}
\def\<{\relax\ifmmode\mskip-.333333\thinmuskip\relax\else\kern-.0555556em\fi}
\def\vsk#1>{\vskip#1\baselineskip}
\def\vv#1>{\vadjust{\vsk#1>}\ignorespaces}
\def\vvn#1>{\vadjust{\nobreak\vsk#1>\nobreak}\ignorespaces}
  \let\ssize\scriptstyle
\let\sssize\scriptscriptstyle
\let\Medskip\medskip
\def\medskip{\par\Medskip}
\let\Bigskip\bigskip
\def\bigskip{\par\Bigskip}
\let\Maketitle\maketitle
\def\maketitle{\Maketitle\thispagestyle{empty}\let\maketitle\empty}
\newtheorem{thm}{Theorem}[section]
\newtheorem{cor}[thm]{Corollary}
\newtheorem{lem}[thm]{Lemma}
\theoremstyle{definition}                                  
\numberwithin{equation}{section}
\theoremstyle{definition}
\let\mc\mathcal
\let\nc\newcommand
\let\la\lambda
\let\La\Lambda
\let\phi\varphi
\let\der\partial
\let\geq\geqslant
\let\leq\leqslant
\let\on\operatorname
\let\bi\bibitem
\let\bs\boldsymbol
\def\C{{\mathbb C}}
\def\Z{{\mathbb Z}}
\def\F{{\mathbb F}}   % new Dec 2019
\def\+#1{^{\{#1\}}}
\def\beq{\begin{equation}}
\def\eeq{\end{equation}}
\def\be{\begin{equation*}}
\def\ee{\end{equation*}}
\nc{\bea}{\begin{eqnarray*}}
\nc{\eea}{\end{eqnarray*}}
\nc{\bean}{\begin{eqnarray}}
\nc{\eean}{\end{eqnarray}}
\nc{\Il}{{\mc I_{\bs\la}}}
\nc{\bla}{{\bs\la}}
\nc{\Fla}{\F_\bla}
\nc{\tfl}{{T^*\Fla}}
\nc{\GL}{{GL_n(\C)}}
\nc{\GLC}{{GL_n(\C)\times\C^*}}
\let\sd s %% \def\sd{\dot s}
\def\ddk_#1{\kk_{#1}\<\>\frac\der{\der\<\>\kk_{#1}}}
\def\bul{\mathbin{\raise.2ex\hbox{$\sssize\bullet$}}}
\def\intt{\mathchoice
{\mathop{\raise.2ex\rlap{$\,\,\ssize\backslash$}{\intop}}\nolimits}
{\mathop{\raise.3ex\rlap{$\,\sssize\backslash$}{\intop}}\nolimits}
{\mathop{\raise.1ex\rlap{$\sssize\>\backslash$}{\intop}}\nolimits}
{\mathop{\rlap{$\sssize\<\>\backslash$}{\intop}}\nolimits}}
\let\kk q %% Q
\let\cc c
\let\Ko K
\def\GZ/{Gelfand-Zetlin}
\def\KZ/{{\slshape KZ\/}}
\def\qKZ/{{\slshape qKZ\/}}
\def\XXX/{{\slshape XXX\/}}
\def\zz{{\bs z}}
\nc{\A}{{\mc A}}
\def\Q{{\mathbb Q}}
\nc{\hsl}{\widehat{{\frak{sl}_2}}}
\nc{\BC}{{ \mathbb C}}
\nc{\lra}{\longrightarrow}
\nc{\CO}{{\mathcal{O}}}
\nc{\BZ}{{ \mathbb Z}}
\nc{\hfn}{\hat{\frak{n}}}
\nc\Zs{{\Z/p^s\Z}}
\nc\Zo{{\Zs[z]^0}}
\nc\gr{{\on{gr}}}
\nc\fD{{\frak D}}
\def\aa{{\bs a}}
\def\grad{{\on{grad}}}
\begin{document}

\hrule width0pt
\vsk->

\title[Dynamical and \qKZ/ equations modulo $p^s$, an example]
{Dynamical and \qKZ/ equations modulo $p^s$,
\\
 an example}

\author[Alexander Varchenko]
{Alexander Varchenko}

\maketitle

\begin{center}
{\it $^{\star}$ Department of Mathematics, University
of North Carolina at Chapel Hill\\ Chapel Hill, NC 27599-3250, USA\/}

%\vsk.5>
%{\it $^{ \star}$ Faculty of Mathematics and Mechanics, Lomonosov Moscow State
%University\\ Leninskiye Gory 1, 119991 Moscow GSP-1, Russia\/}

%\vsk.5>
 %{\it $^{ \star}$ Moscow Center of Fundamental and Applied Mathematics
%\\ Leninskiye Gory 1, 119991 Moscow GSP-1, Russia\/}

\end{center}

\vsk>
{\leftskip3pc \rightskip\leftskip \parindent0pt \Small
{\it Key words\/}: Dynamical and \qKZ/ equations; $p^s$-hypergeometric solutions; master polynomials; Dwork congruences.

\vsk.6>
{\it 2020 Mathematics Subject Classification\/}: 11D79 (12H25, 32G34, 33C05, 33E30)
\par}
% 11A07 Congruences; primitive roots; residue systems
% 11B65 Binomial coefficients; factorials; q-identities
% 11D79 Congruences in many variables
% 12H25 p-adic differential equations
% 32G34 Moduli and deformations for ordinary differential equations (e.g., Knizhnik--Zamolodchikov equation)
% 33C05 Classical hypergeometric functions, 2F1
% 33C60 Hypergeometric integrals and functions defined by them (E, G, H and I functions)
% 33C70 Other hypergeometric functions and integrals in several variables
% 33C75 Elliptic integrals as hypergeometric functions
% 33E05 Elliptic functions and integrals
% 33E30 Other functions coming from differential, difference and integral equations
% 33E50 Special functions in characteristic p (gamma functions, etc.)

{\let\thefootnote\relax
\footnotetext{\vsk-.8>\noindent
$^\star\<${\sl E\>-mail}:\enspace anv@email.unc.edu,
supported in part by NSF grant DMS-1954266
}}

\begin{abstract}

We consider an example of the joint system of dynamical differential equations and \qKZ/ 
difference equations
with parameters corresponding to equations for elliptic integrals. We solve this system of equations modulo
 any power $p^n$ of a 
 prime integer $p$. We show that the $p$-adic limit of these solutions as $n\to\infty$
determines
a sequence of line bundles, each of which is invariant with respect to the corresponding dynamical connection,
 and that sequence of line bundles is invariant with respect to
  the corresponding \qKZ/ difference connection.

\end{abstract}

{\small\tableofcontents\par}

\setcounter{footnote}{0}
\renewcommand{\thefootnote}{\arabic{footnote}}

\section{Introduction}

\noindent
Let $\zz=(z_1,z_2)$, 
\bea
\Phi (t;\zz;\la, \mu) = t^{-\la}(t-z_1)^{-\mu}
(t-z_2)^{-\mu},
\eea
where $\la, \mu$ are rational numbers.
Consider the column vector 
\bean
\label{IC}
I^{(C)}(\zz;\la,\mu)=\int_C  \Big(\frac {\Phi}{t-z_1}, \frac {\Phi}{t-z_2}\Big)dt ,
\eean
where $C\subset \C-\{0,z_1,z_2\}$  is a contour 
on which the integrand  takes its initial value when $t$ encircles $C$.
As a function of $\zz$, the vector $I^{(C)}(\zz;\la,\mu)$ extends to a multi-valued analytic function on 
$\{\aa \in \C^2\ |\ a_1a_2(a_1-a_2)\ne 0\}$.

\vsk.2>
The function $I^{(C)}(\zz;\la,\mu)$ satisfies the differential and difference equations,
\bean
\label{DY}
z_1\frac{\der I}{\der z_1}(\zz;\la,\mu) &=&
\left(
\begin{bmatrix}
-\la - \mu & -\mu
\\
0&0
\end{bmatrix} 
+\frac{\mu z_1}{z_1-z_2}
 \begin{bmatrix}
-1& 1
\\
1
& -1
\end{bmatrix}  
\right) I(\zz;\la,\mu)  ,
\\ 
\notag
 z_2\frac{\der I}{\der z_2}(\zz;\la,\mu) &=&
\left(
\begin{bmatrix}
0 & 0
\\
-\mu &-\la-\mu
\end{bmatrix} 
+\frac{\mu z_2}{z_2-z_1}
 \begin{bmatrix}
-1& 1
\\
1
& -1
\end{bmatrix}  
\right) I(\zz;\la,\mu) ,
\\
\label{KZ}
I (\zz;\la+1,\mu)
&=&
\begin{bmatrix} \frac{\la+\mu}{z_1\la} &\frac \mu{z_1\la}
\\
\frac \mu{z_2\la}&\frac{\la+\mu}{z_2\la}
\end{bmatrix} I(\zz;\la,\mu)\, .
\eean
If $\la, \mu, \la+2\mu\notin\Z_{>0}$, then all solutions of these equations are given by integrals 
$I^{(C)}(\zz;\la,\mu)$ (with different choices of $C$)
up to multiplication by a scalar 1-periodic function of $\la$, this fact follows from \cite[Theorem 1.1]{V1}.

\vsk.2>
Up to a gauge transformation, equations \eqref{DY}, \eqref{KZ} are the simplest 
example of the trigonometric  \KZ/ differential 
equations and dynamical difference equations, see \cite{TV1,MV}. 
They are also  the simplest example 
of the dynamical differential equations and \qKZ/ difference equations,
see \cite{TV2,TV3}. Up to a gauge transformation, they are the equivariant quantum differential equations and
\qKZ/ difference equation associated with the cotangent bundle of projective line, see \cite{TV3}.
The family of functions $I^{(C)}(\zz;\la,\mu)$, labeled by contours $C$, are the hypergeometric solutions of these
equations constructed in \cite{MV, TV2}. In particular, see  the integral $I^{(C)}(\zz;\la,\mu)$
(gauge transformed) in \cite[Section 7.4]{TV2}. We call equations \eqref{DY} the dynamical differential equations and equation \eqref{KZ} the \qKZ/ difference equation.

\vsk.2>

In this paper we discuss polynomial solutions of equations
 \eqref{DY} and \eqref{KZ} modulo $p^n$, where $p$ is a prime integer and $n$ is a positive integer. 
 We also discuss the $p$-adic limit of these solutions as $n\to \infty$. 
 
 \vsk.2>
 
 More precisely, we consider  the following problem. 
For $\la_0\in\Q$, let 
$\La(\la_0)=\{\la_0 + l\ | \ l\in\Z\}$ be the arithmetic sequence with
initial term $\la_0$ and step $1$. For a positive integer $\ell$, let
$\La(\la_0,\ell)=\{\la_0 + l\ | \ l\in\Z,\, |\la_0+l|< \ell\}$ be an interval of 
the sequence $\La$. 

\medskip

\noindent
{\bf Problem.} Let $p$ be an prime integer, $\la_0,\mu_0\in\Q$, $\ell\in \Z_{>0}$. For any
$n\in\Z_{>0}$, find a sequence of column vectors
\bean
\label{covec} 
I(\zz;\la;\ell;n)=(I_1(\zz;\la;\ell;n), I_2(\zz;\la,\ell;n)),\qquad \la\in \La(\la_0,\ell),
\eean
such that 
\begin{enumerate}

\item[$(\on{i})$]
the coordinates of these vectors are  polynomials in $\zz$ with integer coefficients, 

\item[$(\on{ii})$]

each of the vectors $I(\zz;\la;\ell;n)$ satisfies  modulo $p^n$  differential equations \eqref{DY}
with parameter $\mu=\mu_0$;

\item[$(\on{iii})$]
 
this sequence of vectors satisfies  modulo $p^n$  the difference
equation \eqref{KZ} 
with parameter $\mu=\mu_0$.

\end{enumerate}
We may also require that the vectors  are functorial in the following sense.
If $\la\in \La(\la_0,\ell_0)$ for some $l_0$, then the vector $I(\zz;\la;\ell;n)$ does not depend on $\ell$ for
$\ell \geq\ell_0$. Having a solution $\{I(\zz;\la;\ell;n)\}$ of this problem we may  study the $p$-adic limit of
the vectors  as $n\to\infty$.

\vsk.2>
In this paper we construct a solution of this problem for 
\bean
\label{chp}
&(\la_0, \mu_0) = \big(\frac{1}2, \frac{1}2\big).
\eean
We also describe the $p$-adic limit of our solution. It turns out that the limit is not a solution 
of  equations \eqref{DY} and \eqref{KZ} over a $p$-adic field, as one may naively think, 
but a line bundle invariant with respect to the dynamical connection, defined by equations \eqref{DY},
and invariant with respect to the discrete \qKZ/ connection, defined by equation \eqref{KZ}, see Theorem
\ref{thm inv}.
Notice that there is no such a line bundle if we consider the same differential and difference equations
over the field of complex numbers, see Section \ref{sec last}.

\vsk.2>
The choice of parameters in \eqref{chp} corresponds to elliptic integrals in \eqref{IC}. 
This choice  is technically, 
arithmetically easier than the choice of an arbitrary pair $(\la_0,\mu_0)$ of rational numbers, 
although a similar construction can be performed for a wide class of parameters  $(\la_0,\mu_0)$.

\vsk.2>

Quantum differential equations and associated \qKZ/ difference equations, as well as their solutions, is a
mathematical structure
 with applications in representation theory, algebraic geometry, theory of special function, to name
  a few. It would be interesting to study how the properties of these equations
 and their solutions are reflected in the solutions of the same equations modulo powers of a prime integer
 and  in their $p$-adic limits.

\smallskip
In Section \ref{sec 2} we reformulate equations \eqref{DY} and \eqref{KZ} for 
$(\la_0, \mu_0) = \big(\frac{1}2, \frac{1}2\big)$, see equations \eqref{dy}, \eqref{kz}.
In Section \ref{sec 3} we solve equations \eqref{dy}, \eqref{kz} modulo a power $p^s$ of an
odd prime integer $p$.  The construction of  these solutions is a variant of the constructions
in \cite{SV,V3}.  The constructed solutions are called the $p^s$-hypergeometric solutions in \cite{V3}.
On the $p^s$-hypergeometric solutions see also \cite{V2,V4, V5, VZ1,VZ2, RV1, RV2}.

We prove Dwork-type congruences for these solutions in Section \ref{sec 4}.
Using these congruences we descibe the $p$-adic limit of our solutions as $s\to\infty$ in
 Section \ref{sec 5}.

\smallskip
The author thanks R.\,Rim\'anyi  and A.\,Smirnov for useful discussions and
  Max Planck Institute for Mathematics in Bonn for hospitality in May-June of 2022.

\newpage

\section{Equations for $(\la_0, \mu_0) = \big(\frac{1}2, \frac{1}2\big)$}
\label{sec 2}

Denote 
\bean
\label{H}
&&
H_1(\zz;\la) =
\begin{bmatrix}
-\la - 1 & -1
\\
0&0
\end{bmatrix} 
+\frac{ z_1}{z_1-z_2}
 \begin{bmatrix}
-1& 1
\\
1
& -1
\end{bmatrix}     ,
\\ 
\notag
&&
H_2(\zz;\la) =
\begin{bmatrix}
0 & 0
\\
-1 &-\la-1
\end{bmatrix} 
+\frac{ z_2}{z_2-z_1}
 \begin{bmatrix}
-1& 1
\\
1
& -1
\end{bmatrix}   ,
\\
\notag
&&
K(\zz,\la)
=
 \begin{bmatrix}
\frac{\la+1}{z_1\la} &\frac 1{z_1\la}
\\
\frac 1{z_2\la}&\frac{\la+1}{z_2\la}
\end{bmatrix}  .
\eean
The substitution $(\la,\mu)\, \to\, (\frac \la2, \frac12)$ transforms the system of
equations \eqref{DY}, \eqref{KZ} to the following system of equations for a column vector 
$I(\zz;\la)$, 
\bean
\label{dy}
2z_i\,\frac{\der I}{\der z_i}(\zz;\la) 
&=&
 H_i(\zz;\la) I(\zz;\la)  ,
\qquad
i=1,2,
\\
\label{kz}
I (\zz;\la+2)
&=& K(z,\la) I(\zz;\la)\, ,
\eean
Denote
\bean
\label{nabla}
&&
\mc D_i(\la)  \,:= \,2z_i\,\frac{\der }{\der z_i}- H_i(\zz;\la),
\qquad
i=1,2.
\eean

\section{Solutions modulo powers of $p$}
\label{sec 3}

\subsection{Notations}
In this paper $p$ is an odd prime integer.

\vsk.2>

In this paper we consider the system of equations \eqref{dy} and \eqref{kz} for
the values of $\la$ from  the arithmetic sequence
 of odd integers, $\La :=1+2\Z$.

\vsk.2>

Given a positive integer $s$, denote 
\bean
\La_s = \{ \la\in 1+ 2\Z \mid -p^s<\la<p^s\},
\eean
an interval of the arithmetic sequence $\La$.
\begin{itemize}
\item
For $\la\in \La_s$, we have $0<\frac{p^s-\la}2< p^s$.
\item
For a positive integer $e$, if  $s>e$, $\la\in\La_e$, then
$\big|\frac{p^{s}-\la}2\big|_p >p^{-e}$, where $|x|_p$ denotes the $p$-adic norm of a rational number $x$.

\end{itemize}
For a polynomial $f(t)$, denote by $\{f(t)\}_s$ the coefficient of
$t^{p^s-1}$ in $f(t)$.
For a function $g(\zz)$, denote by $\grad_\zz g$
the column gradient  vector $\big(\frac {\der g}{\der z_1}, \frac {\der g}{\der z_2}\big)$.

\subsection{Solutions}

For $\la\in\La_s$, define the master polynomial,
\bean
\label{Map}
\Phi_s(t;\zz;\la) :=t^{(p^s-\la)/2}(t-z_1)^{(p^s-1)/2}(t-z_2)^{(p^s-1)/2}.
\eean
Define the column vector
\bea
\Psi_s(t;\zz;\la)=\big(\Psi_{s,1}(t;\zz;\la), \Psi_{s,2}(t;\zz;\la)\big)
:=\,\Phi_s(t;\zz;\la)\Big(\frac {1}{t-z_1}, \frac {1}{t-z_2}\Big).
\eea
The coordinates of $\Psi_s$ are polynomials in $t,\zz$ with integer coefficients.
We denote 
\bean
\label{I_s}
I_s(\zz;\la)= (I_{s,1}(\zz;\la),I_{s,2}(\zz;\la)) := \{\Psi_s(t;\zz;\la)\}_s\,,
\eean
the coefficient of $t^{p^s-1}$ in $\Psi_s(t;\zz;\la)$,
 and  
\bean
\label{T_s}
 T_s(\zz;\la) := \{\Phi_s(t;\zz;\la)\}_s\,,
\eean
the coefficient of $t^{p^s-1}$ in $\Phi_s(t;\zz;\la)$.
For every $\la\in\La_s$, the functions $I_{s,1},I_{s,2},T_s$ are polynomials  in  $z_1,z_2$
with integer coefficients.

We have
\bean
\label{Ina}
\frac{1-p^s}2\, I_s(\zz;\la)\,=\, \grad_\zz T_s(\zz;\la)\,.
\eean

\begin{thm}
\label{thm ps-dy}
Let $s\in\Z_{>0}$,\,  $\la\in\La_s$,\, $i=1,2$. Then the vector $I_s(\zz;\la)$ satisfies the
congruence\,
\bean
 \label{pdy}
 \mc D_i(\la) I_s(\zz;\la) \equiv 0 \pmod{p^s}.
  \eean

\end{thm}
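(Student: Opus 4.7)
The plan is to lift the congruence from $I_s$ to the master vector $\Psi_s$. Since $\mc D_i(\la)$ involves only $\zz$-derivatives and multiplication by $\zz,\la$-dependent scalars, it commutes with the coefficient-extraction map $\{\cdot\}_s$; moreover, for any polynomial $f(t;\zz)$ the coefficient $\{\partial_t f\}_s = p^s\,[t^{p^s}]f$ is automatically divisible by $p^s$. Hence it suffices to produce polynomials $F_i^{(s)}(t;\zz;\la)$ in $t$, with $p$-integral coefficients in $\zz$, such that
\begin{equation*}
\mc D_i(\la)\,\Psi_s(t;\zz;\la) \;\equiv\; \frac{\partial}{\partial t}\,F_i^{(s)}(t;\zz;\la) \pmod{p^s}.
\end{equation*}

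To construct the $F_i^{(s)}$, I would carry out the ``exact-form'' reduction that verifies the complex integral \eqref{IC} satisfies \eqref{DY}, but with the complex exponents $-\la/2$ and $-1/2$ replaced by their $p^s$-adic avatars $a:=(p^s-\la)/2$ and $b:=(p^s-1)/2$, which satisfy $2a\equiv-\la$ and $2b\equiv-1 \pmod{p^s}$. Compute $\partial_{z_i}\Psi_{s,j}$ by the chain rule; the resulting double-pole terms $\Phi_s/(t-z_i)^2$ are eliminated via
\begin{equation*}
\partial_t\bigl(\Phi_s/(t-z_i)\bigr) \;=\; a\Phi_s/(t(t-z_i)) + (b-1)\Phi_s/(t-z_i)^2 + b\Phi_s/((t-z_1)(t-z_2)),
\end{equation*}
and partial fractions in $\zz$ convert everything to integer combinations of $\Psi_{s,1}$, $\Psi_{s,2}$, and $\Phi_s/t$. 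For the off-diagonal entry one gets $2z_1\,\partial_{z_1}\Psi_{s,2} - (H_1\Psi_s)_2 = -(2b+1)\,z_1\,\Phi_s/((t-z_1)(t-z_2)) = -p^s\,z_1\,\Phi_s/((t-z_1)(t-z_2))$, already $p^s$ times a polynomial. For the diagonal entry, after substituting $2a\equiv-\la$ and $2b\equiv-1$, the residual non-derivative terms collapse into $\la\Phi_s/t+\Psi_{s,1}+\Psi_{s,2}$, which by the identity $\partial_t\Phi_s = a\Phi_s/t+b\Psi_{s,1}+b\Psi_{s,2}$ is congruent to $-2\partial_t\Phi_s$ modulo $p^s$. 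This yields $F_1^{(s)} = -(2z_1\Psi_{s,1}+2\Phi_s)$; the case $i=2$ follows by the symmetry $z_1\leftrightarrow z_2$.

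The main obstacle is purely algebraic bookkeeping: ensuring the partial-fraction rearrangements and the two exact-form identities (for $\partial_t(\Phi_s/(t-z_i))$ and $\partial_t\Phi_s$) assemble, after the congruences $2a\equiv-\la$ and $2b\equiv-1\pmod{p^s}$, into a clean total $t$-derivative on the right-hand side. No new idea is needed beyond the parallel with the complex computation: once the $t$-derivative form of $\mc D_i(\la)\Psi_s$ modulo $p^s$ is in hand, applying $\{\cdot\}_s$ and using $\{\partial_t(\text{polynomial})\}_s\equiv0\pmod{p^s}$ yields \eqref{pdy}.
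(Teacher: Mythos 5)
Your proof is correct and follows essentially the same route as the paper: both reduce $\mc D_i(\la)\Psi_s$ modulo $p^s$ to a total $t$-derivative (plus a piece that is manifestly $p^s$ times a polynomial) using the logarithmic-derivative identity for $\partial_t\Phi_s$, partial fractions, and the congruences $p^s-\la\equiv-\la$, $p^s-1\equiv-1 \pmod{p^s}$, and then conclude by extracting the coefficient of $t^{p^s-1}$. The only difference is presentational: the paper isolates the double-pole term $-\frac{p^s-3}{2}\,\Phi_s/(t-z_1)^2=\partial_{z_1}\Psi_{s,1}$ and rewrites it step by step, whereas you package the identical identities as $\mc D_i(\la)\Psi_s\equiv\partial_t F_i^{(s)}\pmod{p^s}$.
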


\begin{proof}

We have
\bean
\label{dpt}
\frac{\der \Phi_s}{\der t} = \Big(\frac{p^s-\la}{2t} + \frac{p^s-1}{2(t-z_1)}
+ \frac{p^s-1}{2(t-z_2)}\Big)\Phi_s\,.
\eean
Also
\bea
&&
-\,\frac{\der\Psi_{s,1}}{\der t}
=
\Phi_s\Big(-\frac{p^s-\la}{2} \frac1{t(t-z_1)}
-\frac{p^s -3}2 \frac1{(t-z_1)^2} -\frac {p^s-1}2\frac1{(t-z_1)(t-z_2)}\Big)
\\
&&
=\Phi_s\Big(\frac{p^s-\la}{2z_1} \Big[\frac1t-\frac1{t-z_1}\Big]
-\frac{p^s -3}2 \frac1{(t-z_1)^2} 
-\frac{p^s -1}2 \frac1{z_1-z_2}
\Big[\frac1{t-z_1}-\frac1{t-z_2}\Big]\Big).
\eea
Hence
\bea
&&
-\Phi_s\frac{p^s -3}2 \frac1{(t-z_1)^2} 
 =
-\frac{\der}{\der t}\Big(\frac{\Phi}{t-z_1}\Big)\,+
\\
&&
+\ 
\Phi_s\Big(\frac{p^s-\la}{2z_1}\Big[\frac1{t-z_1}-\frac1t\Big]
+\frac{p^s -1}2 \frac1{z_1-z_2}
\Big[\frac1{t-z_1}-\frac1{t-z_2}\Big]\Big)
\\
&&
=
-\frac{\der}{\der t}\Big(\frac{\Phi_s}{t-z_1}\Big)\,+
\Phi_s\Big(\frac{p^s-\la}{2z_1}\frac1{t-z_1}
+\frac{p^s -1}{2} \frac1{z_1-z_2}
\Big[\frac1{t-z_1}-\frac1{t-z_2}\Big]\Big)
\\
&&
+\ \Phi_s \frac 1{z_1}\Big(\frac{p^s -1}2\frac1{t-z_1}+\frac{p^s -1}2\frac 1{t-z_2}\Big)
-\frac 1{z_1} \frac{\der \Phi_s}{\der t}
\\
&&
=
-\frac{\der}{\der t}\Big(\frac{\Phi_s}{t-z_1}\Big)\,-\frac 1{z_1} \frac{\der \Phi_s}{\der t}\ 
\\
&&
+\
\Phi_s\Big(\frac{2p^s-\la-1}{2z_1}\frac1{t-z_1}+\frac{p^s -1}{2z_1} \frac 1{t-z_2}
+\frac{p^s -1}{2}  \frac1{z_1-z_2}
\Big[\frac1{t-z_1}-\frac1{t-z_2}\Big]\Big).
\eea
We have
\bean
\label{1c}
\Big\{\frac{\der}{\der t}\Big(\frac{\Phi_s}{t-z_1}\Big)\Big\}_s
\equiv 0, 
\ \ 
\Big\{\frac{\der \Phi_s}{\der t}\Big\}_s
\equiv 0, 
\ \ 
 \frac{2p^s-\la-1}{2}\equiv -\frac{\la+1}2,
\ \ \frac{2p^s-1}{2}\equiv -\frac{1}2
\eean
modulo $p^s$. Hence
\bea
2 z_1\frac{\der I_{s,1}}{\der z_1}\ \equiv\  -(\la+1) I_{s,1} -  I_{s,2}
+  \frac{z_1}{z_1-z_2} (-I_{s,1}+I_{s,2})  \pmod{p^s}\,.
\eea
We also have
\bea
\frac{\der \Psi_{s,2}}{\der z_1}
= -\frac{p^s-1}2\frac {\Phi_s}{(t-z_1)(t-z_2)}
=-\frac{p^s-1}2\frac {\Phi_s}{z_1-z_2}\Big[\frac 1{t-z_1}-\frac1{t-z_2}\Big].
\eea
Hence
\bea
2 z_1\frac{\der I_{s,2}}{\der z_1}\  \equiv \
  \frac{z_1}{z_1-z_2} (I_{s,1}-I_{s,2})  \pmod{p^s}\,.
\eea 
Equation \eqref{pdy} for $i=1$ is proved. 
Equation \eqref{pdy} for $i=2$ is proved similarly.
\end{proof}

\begin{thm}

\label{thm pqk}
Let  $s>e$ be positive integers, and $\la,\la+2\in\La_{e}$. Then the vector $I_s(\zz;\la)$ satisfies the
congruence\,:
\bean
\label{f pqk}
I (\zz;\la+2)
&\equiv & K(z,\la) I(\zz;\la)\, \pmod{p^{s-e}}.
\eean

\end{thm}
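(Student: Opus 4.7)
The plan is to reduce \eqref{f pqk} to an algebraic identity among coefficients of $\Phi_s$, using the elementary relation $\Phi_s(t;\zz;\la+2) = t^{-1}\Phi_s(t;\zz;\la)$ (immediate from \eqref{Map}). This rewrites $\Psi_{s,i}(t;\zz;\la+2)$ as $\Phi_s(t;\zz;\la)/\bigl(t(t-z_i)\bigr)$; splitting by partial fractions $\frac{1}{t(t-z_i)} = \frac{1}{z_i}\bigl(\frac{1}{t-z_i}-\frac{1}{t}\bigr)$ and extracting the coefficient $\{\cdot\}_s$ of $t^{p^s-1}$ produces, for $i=1,2$,
\[
z_i\,I_{s,i}(\zz;\la+2) \;=\; I_{s,i}(\zz;\la) - A_s(\zz;\la),
\]
where $A_s(\zz;\la)$ denotes the coefficient of $t^{p^s}$ in $\Phi_s(t;\zz;\la)$.

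The crux is to relate $A_s$ to $I_{s,1}+I_{s,2}$. Rewriting \eqref{dpt} as
\[
\frac{\der\Phi_s}{\der t} \;=\; \frac{p^s-\la}{2t}\,\Phi_s + \frac{p^s-1}{2}\bigl(\Psi_{s,1}+\Psi_{s,2}\bigr),
\]
and taking $\{\cdot\}_s$ of both sides---the left-hand side contributes $p^s A_s$, since the coefficient of $t^{p^s-1}$ in $f'$ equals $p^s$ times the coefficient of $t^{p^s}$ in $f$---rearrangement yields the clean identity $(p^s+\la)\,A_s(\zz;\la) = (p^s-1)\bigl(I_{s,1}(\zz;\la)+I_{s,2}(\zz;\la)\bigr)$, and in particular
\[
\la\,A_s(\zz;\la) \;\equiv\; -\bigl(I_{s,1}(\zz;\la)+I_{s,2}(\zz;\la)\bigr)\pmod{p^s}.
\]

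To finish, multiply the first displayed identity by $\la$ and substitute the congruence for $\la A_s$, obtaining
\[
z_i\,\la\,I_{s,i}(\zz;\la+2) \;\equiv\; (\la+1)\,I_{s,i}(\zz;\la) + I_{s,3-i}(\zz;\la)\pmod{p^s},\qquad i=1,2,
\]
which is precisely $z_i\la$ times the $i$-th component of \eqref{f pqk}. Dividing by $z_i\la$ recovers \eqref{f pqk} with the modulus reduced by $v_p(\la)$: since $\la\in\La_e$ is odd with $|\la|<p^e$ and $p$ is odd, the second bullet in Section \ref{sec 3} gives $v_p(\la)\le e-1$, so the congruence in fact holds modulo $p^{s-e+1}$, which implies \eqref{f pqk}. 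The only non-routine step is the extraction identity producing $(p^s+\la)A_s=(p^s-1)(I_{s,1}+I_{s,2})$; everything else is partial-fraction bookkeeping and a one-line $p$-adic valuation estimate.
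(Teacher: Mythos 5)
Your proof is correct and follows essentially the same route as the paper's: both rest on the partial-fraction identity $\frac{1}{t(t-z_i)}=\frac{1}{z_i}\big(\frac{1}{t-z_i}-\frac{1}{t}\big)$ applied to $\Phi_s(t;\zz;\la+2)=t^{-1}\Phi_s(t;\zz;\la)$, and on using the logarithmic-derivative relation \eqref{dpt} together with the divisibility of $\{\frac{\der f}{\der t}\}_s$ by $p^s$ to eliminate the $\Phi_s/t$ contribution. The only difference is organizational: you extract the coefficient of $t^{p^s-1}$ first, obtaining the exact integer identity $(p^s+\la)A_s=(p^s-1)(I_{s,1}+I_{s,2})$, and postpone the single non-integral division by $\la$ to the very end, which makes the loss of at most $e-1$ powers of $p$ explicit (and in fact yields the marginally sharper modulus $p^{s-e+1}$), whereas the paper performs the division by $p^s-\la$ at the level of rational functions of $t$ before extracting coefficients.
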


\begin{proof}
Equation \eqref{dpt} can be written as 
\bea
-\frac{\Phi_s}{t} = - \frac2{p^s-\la}\frac{\der \Phi_s}{\der t}
+ \frac{p^s-1}{p^s-\la} \Big(\frac{1}{t-z_1}
+ \frac{1}{t-z_2}\Big)\Phi_s\,.
\eea
Hence 
\bean
\label{d1}
&&
\Psi_{s,1}(\zz;\la+2) = \frac{\Phi_s(\zz;\la)}{t(t-z_1)} = 
-\frac{\Phi_s(\zz;\la)}{z_1} \Big[\frac1t-\frac1{t-z_1}\Big]
\\
\notag
&&
=
\frac{\Phi_s(\zz;\la)}{z_1(t-z_1)} 
- \frac2{(p^s-\la)z_1}\frac{\der \Phi_s}{\der t}
+ \frac{p^s-1}{p^s-\la} \Big(\frac{1}{z_1(t-z_1)}
+ \frac{1}{z_1(t-z_2)}\Big)\Phi_s(\zz;\la)
\eean
By \eqref{1c} the term $\big\{\frac2{(p^s-\la)z_1}\frac{\der \Phi_s}{\der t}\big\}_s$ is divisible at least by
$p^{s-e}$. Hence \eqref{d1} implies
\bea
I_{s,1}(\zz;\la+2) \equiv \frac{\la+1}{z_1\la} I_{s,1}(\zz;\la) +\frac{1}{z_1\la} I_{s,2}(\zz;\la) \pmod{p^{s-e}}.
\eea
Similarly we obtain
\bea
I_{s,2}(\zz;\la+2) \equiv \frac{\la+1}{z_2\la} I_{s,2}(\zz;\la) +\frac{1}{z_2\la} I_{s,1}(\zz;\la) \pmod{p^{s-e}}.
\eea
Theorem \ref{thm pqk} is proved.
\end{proof}

\subsection{Formulas for $I_{s,1},I_{s,2},T_s$ }

\begin{lem}

For $\la\in\La_s$  we have
\bean
\label{fT_s}
T_s(\zz;\la)
&=& (-1)^{\frac{p^s-\la}2}
\sum_{k+\ell =\frac{p^s-\la}2} \binom{\frac{p^s-1}2}{k}\binom{\frac{p^s-1}2}{\ell}\,z_1^kz_2^\ell\,,
\\
\label{fI_{s,1}}
I_{s,1}(\zz;\la)&=& (-1)^{\frac{p^s-\la}2-1}
\sum_{k+\ell =\frac{p^s-\la}2-1} \binom{\frac{p^s-1}2-1}{k}\binom{\frac{p^s-1}2}{\ell}\,z_1^kz_2^\ell\,,
\\
\label{fI_{s,2}}
\pushQED{\qed} 
I_{s,2}(\zz;\la)&=&(-1)^{\frac{p^s-\la}2-1}
\sum_{k+\ell =\frac{p^s-\la}2-1} \binom{\frac{p^s-1}2}{k}\binom{\frac{p^s-1}2-1}{\ell}\,z_1^kz_2^\ell\,.\qquad
\qedhere
\popQED
\eean

\end{lem}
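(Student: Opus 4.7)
The plan is to prove all three formulas by direct multinomial expansion of the master polynomial and its two cousins $\Psi_{s,1}=\Phi_s/(t-z_1)$ and $\Psi_{s,2}=\Phi_s/(t-z_2)$, then extract the coefficient of $t^{p^s-1}$. No arithmetic obstructions are involved; the identities are polynomial (over $\Z$), not congruences.

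Write $a=(p^s-\la)/2$ and $b=(p^s-1)/2$, so that
\[
\Phi_s(t;\zz;\la)=t^{a}(t-z_1)^{b}(t-z_2)^{b}.
\]
Expanding $(t-z_1)^b$ and $(t-z_2)^b$ by the binomial theorem and multiplying gives
\[
\Phi_s=\sum_{k,\ell\ge 0}(-1)^{k+\ell}\binom{b}{k}\binom{b}{\ell}z_1^k z_2^\ell\,t^{a+2b-k-\ell}.
\]
The coefficient of $t^{p^s-1}$ picks out the indices with $a+2b-k-\ell=p^s-1$, i.e.\ $k+\ell=a=(p^s-\la)/2$, giving formula \eqref{fT_s} after noting $(-1)^{k+\ell}=(-1)^{(p^s-\la)/2}$ on this diagonal.

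For \eqref{fI_{s,1}}, start from $\Psi_{s,1}=t^{a}(t-z_1)^{b-1}(t-z_2)^{b}$ and repeat the same expansion:
\[
\Psi_{s,1}=\sum_{k,\ell\ge 0}(-1)^{k+\ell}\binom{b-1}{k}\binom{b}{\ell}z_1^k z_2^\ell\,t^{a+2b-1-k-\ell}.
\]
The coefficient of $t^{p^s-1}$ now comes from $k+\ell=a-1=(p^s-\la)/2-1$, which yields \eqref{fI_{s,1}}. Formula \eqref{fI_{s,2}} is obtained identically with the roles of the two factors interchanged.

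There is essentially no main obstacle; the only bookkeeping point is to check that the indices $a$ and $b-1$ are nonnegative for $\la\in\La_s$. Since $-p^s<\la<p^s$ and $\la$ is odd, we have $1\le a\le p^s-1$, so $a\ge 1$ and $b-1=(p^s-3)/2\ge 0$ (using that $p$ is an odd prime, hence $p\ge 3$ and $p^s\ge 3$), ensuring that the binomial coefficients $\binom{b-1}{k}$ and the shifted summation range are legitimate.
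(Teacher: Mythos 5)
Your proof is correct and is exactly the direct binomial-expansion computation that the paper treats as immediate (the lemma is stated with no written proof). The coefficient extraction, the sign $(-1)^{(p^s-\la)/2}$, and the range checks on $a=(p^s-\la)/2$ and $b-1=(p^s-3)/2$ all check out.
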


\subsection{$p$-ary representations}

For $\la\in\La_s$  we have the following $p$-ary representations
\bean
\label{vws}
\frac{p^s-\la}2 &=& w_0(\la)+w_1(\la)p+\dots +w_{s-1}(\la)p^{s-1}, 
\\
\label{-la}
\frac{-\la}2 &=& w_0(\la)+w_1(\la)p+\dots +w_{s-1}(\la)p^{s-1} +\frac{p-1}2p^s + \frac{p-1}2p^{s+1} + \dots
\eean 
for some integers  $w_i(\la)$, \, $0\leq w_i(\la)\leq p-1$. Denote $w_i(\la) = \frac{p-1}2$ for $i\geq s$.

\vsk.2>
For $\la\in\La$, denote by $W(\la)$ the set of all distinct integers $w_i(\la)$ in the $p$-ary representation of 
$\frac{-\la}2$. The set $W(\la)$ has at most  $p$ elements.

\vsk.2>
For example, $\frac{-1}2 = \frac{p-1}2(1+p+\dots)$ and $W(1)=\{\frac{p-1}2\}$,
while $\frac{1}2 = \frac{p+1}2+\frac{p-1}2(p+p^2+\dots)$, and $W(-1)=\{\frac{p+1}2, \frac{p-1}2\}$.

\vsk.2>

For $w=0, 1, \dots, p-1$,\, let $h(\zz;w)$ (resp.  $g_1(\zz;w)$, resp. $g_2(\zz;w)$)
be the coefficient of $t^{p-1}$ in  $t^w(t-z_1)^{(p-1)/2} (t-z_2)^{(p-1)/2}$
(resp. in $t^w(t-z_1)^{(p-1)/2-1} (t-z_2)^{(p-1)/2}$, resp. in 
$t^w(t-z_1)^{(p-1)/2} (t-z_2)^{(p-1)/2-1}$).

\begin{lem}
\label{lem 3.4}
For $\la\in\La_s$  we have
\bean
\label{Tsh}
T_s(\zz;\la) \equiv \prod_{i=0}^{s-1} h\big(\zz^{p^i};\,w_i(\la)\big) \pmod{p}  .
\eean
The polynomial  $T_s(\zz;\la)$ is nonzero modulo $p$.

\end{lem}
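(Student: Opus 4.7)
The plan is to use the Frobenius congruence $(x+y)^p\equiv x^p+y^p\pmod p$ to factor $\Phi_s$ modulo $p$ into a product of $s$ ``local'' factors, one per $p$-ary digit $w_i(\la)$, and then to extract the coefficient of $t^{p^s-1}$ digit by digit.

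From \eqref{vws} we have $t^{(p^s-\la)/2}=\prod_{i=0}^{s-1}(t^{p^i})^{w_i(\la)}$, and from $(p^s-1)/2=\frac{p-1}{2}(1+p+\cdots+p^{s-1})$ combined with $(t-z_j)^{p^i}\equiv t^{p^i}-z_j^{p^i}\pmod p$,
\[
(t-z_j)^{(p^s-1)/2}\,\equiv\,\prod_{i=0}^{s-1}(t^{p^i}-z_j^{p^i})^{(p-1)/2}\pmod p,\qquad j=1,2.
\]
Multiplying these together yields
\[
\Phi_s(t;\zz;\la)\,\equiv\,\prod_{i=0}^{s-1}(t^{p^i})^{w_i(\la)}(t^{p^i}-z_1^{p^i})^{(p-1)/2}(t^{p^i}-z_2^{p^i})^{(p-1)/2}\pmod p.
\]

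Next I would extract the coefficient of $t^{p^s-1}$. Viewed as a polynomial in $u_i:=t^{p^i}$, the $i$-th factor has degree at most $w_i(\la)+(p-1)\le 2p-2$, so the coefficient of $t^{p^s-1}$ equals $\sum\prod_i c_i(k_i)$ summed over tuples $(k_0,\dots,k_{s-1})$ with $\sum k_i p^i=p^s-1$ and $0\le k_i\le 2p-2$, where $c_i(k)$ is the coefficient of $u_i^k$ in the $i$-th factor. The only admissible tuple is $k_i=p-1$ for every $i$: reducing the constraint modulo $p$ forces $k_0\equiv p-1\pmod p$, so $k_0\in\{p-1,2p-1\}$; the bound rules out $2p-1$, and applying the same argument to $(p^s-1-(p-1))/p=p^{s-1}-1$ finishes the induction. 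Since $c_i(p-1)=h(\zz^{p^i};w_i(\la))$ by definition of $h$, this proves \eqref{Tsh}.

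For nonvanishing modulo $p$, it suffices to show each $h(\zz;w)$ with $0\le w\le p-1$ is nonzero in $\F_p[z_1,z_2]$, since then its Frobenius substitute $h(\zz^{p^i};w)$ is also nonzero (the map $z_j\mapsto z_j^{p^i}$ is injective), and $\F_p[z_1,z_2]$ is an integral domain, so the product over $i$ remains nonzero. Expanding,
\[
h(\zz;w)=(-1)^w\!\!\sum_{\substack{a+b=p-1-w\\ a,b\ge 0}}\!\!\binom{(p-1)/2}{a}\binom{(p-1)/2}{b}\,z_1^{(p-1)/2-a}z_2^{(p-1)/2-b},
\]
and the subset of pairs with additionally $a,b\le(p-1)/2$ is nonempty (any $a$ with $\max(0,(p-1)/2-w)\le a\le(p-1)/2$ works), so at least one monomial has a coefficient that is a product of two binomials $\not\equiv 0\pmod p$.

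The main obstacle is the uniqueness claim in the second step: it depends on the sharp bound $w_i(\la)\le p-1$, which gives $k_i\le 2p-2$ and just barely excludes $k_i=2p-1$. Everything else is routine bookkeeping with Frobenius and $p$-ary expansions.
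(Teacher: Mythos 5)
Your proof is correct and follows essentially the same route as the paper: the Frobenius factorization of $\Phi_s$ modulo $p$ (the paper's congruence \eqref{mop}), followed by checking that each $h(\zz;w)$, $0\le w\le p-1$, has a monomial with coefficient a product of two binomials $\binom{(p-1)/2}{\cdot}$ that is nonzero modulo $p$. Your careful justification of the coefficient extraction — that $k_i=p-1$ is the unique digit decomposition of $p^s-1$ under the bound $k_i\le 2p-2$ — is a detail the paper elides with ``This implies \eqref{Tsh}'', and it is handled correctly.
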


\begin{proof}

We have $\frac{p^s-1}2=\frac{p-1}2\,(1+p+\dots+p^{s-1})$.
Then
\bean
\label{mop}
\Phi_s(t;\zz;\la) 
&\equiv&
 \prod_{i=0}^{s-1}
 (t^{p^i})^{w_i(\la)}(t^{p^i}-z_1^{p^i})^{(p-1)/2}(t^{p^i}-z_2^{p^i})^{(p-1)/2} \pmod{p}.
\eean
This implies \eqref{Tsh}.
To prove the second statement of the lemma it is enough to check that 
the polynomial $h(\zz;w)$  is nonzero modulo $p$ for $w=0,1,\dots,p-1$.
Indeed, there exist nonnegative integers 
$k,\ell$ such that $w=k+\ell$ and 
$k,\ell\leq (p-1)/2$. Then
the coefficient  of $z_1^kz_2^\ell$ in $h(\zz;w)$ equals
$(-1)^w\binom{\frac{p-1}2}{k}\binom{\frac{p-1}2}{\ell}$ and 
is nonzero modulo $p$ by Lucas Theorem.
\end{proof}

\begin{lem}
\label{lem 3.5}
Let $\la\in \La_s$ and $j=1,2$. Then
\bean
\label{Tsh1}
I_{s,j}(\zz;\la) \equiv g_j(\zz;w_0(\la)) \prod_{i=1}^{s-1} h\big(\zz^{p^i};w_i(\la)\big) \pmod{p}  .
\eean
If $\la$ is not divisible by $p$, then $I_{s,j}(\zz;\la)$ is nonzero modulo $p$.

\end{lem}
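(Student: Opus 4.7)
The plan is to parallel the proof of Lemma \ref{lem 3.4}. Starting from
$\Psi_{s,1}(t;\zz;\la)=t^{(p^s-\la)/2}(t-z_1)^{(p^s-1)/2-1}(t-z_2)^{(p^s-1)/2}$,
I would use \eqref{vws}, the identity
$(p^s-1)/2-1=(p-3)/2+\frac{p-1}{2}(p+p^2+\dots+p^{s-1})$,
and the Frobenius congruence $(t-z_j)^{p^i}\equiv t^{p^i}-z_j^{p^i}\pmod p$ to obtain the analogue of \eqref{mop}:
\bea
\Psi_{s,1}(t;\zz;\la) &\equiv& t^{w_0(\la)}(t-z_1)^{(p-1)/2-1}(t-z_2)^{(p-1)/2}\\
&&\times\, \prod_{i=1}^{s-1}(t^{p^i})^{w_i(\la)}(t^{p^i}-z_1^{p^i})^{(p-1)/2}(t^{p^i}-z_2^{p^i})^{(p-1)/2} \pmod p.
\eea
The only modification relative to Lemma \ref{lem 3.4} is that the exponent of $(t-z_1)$ in the $i=0$ factor is reduced by one; the case $j=2$ is obtained by interchanging $z_1\leftrightarrow z_2$.

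Next, I would extract the coefficient of $t^{p^s-1}$ via the uniqueness-of-base-$p$-digits argument implicit in Lemma \ref{lem 3.4}. Taking the coefficient of $t^{b_0}$ from the $i=0$ factor and of $(t^{p^i})^{b_i}$ from each factor with $i\ge 1$, one needs $b_0+\sum_{i\ge 1}b_ip^i=p^s-1$ with $w_0(\la)\le b_0\le w_0(\la)+p-2$ and $w_i(\la)\le b_i\le w_i(\la)+p-1$ for $i\ge 1$. Setting $c_i=b_i-w_i(\la)$, this becomes $\sum c_ip^i=(p^s+\la-2)/2$ with $0\le c_0\le p-2$ and $0\le c_i\le p-1$. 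Since $(p^s+\la-2)/2=(p^s-1)-(p^s-\la)/2$ has base-$p$ expansion $\sum(p-1-w_i(\la))p^i$, uniqueness forces $c_i=p-1-w_i(\la)$ and hence $b_i=p-1$, valid exactly when $w_0(\la)\ge 1$. This yields the diagonal contribution $g_1(\zz;w_0(\la))\prod_{i=1}^{s-1}h(\zz^{p^i};w_i(\la))$, proving \eqref{Tsh1} for $j=1$. In the boundary case $w_0(\la)=0$, both sides of \eqref{Tsh1} vanish mod $p$: the right side because $t^0(t-z_1)^{(p-1)/2-1}(t-z_2)^{(p-1)/2}$ has degree $p-2<p-1$, so $g_1(\zz;0)=0$; and the left side because no admissible digit vector $(b_i)$ exists.

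For the nonvanishing claim, observe that $w_0(\la)\equiv -\la/2\pmod p$ (with $1/2$ denoting the inverse of $2$ mod $p$), so $w_0(\la)\ne 0$ if and only if $p\nmid \la$. Each factor $h(\zz^{p^i};w_i(\la))$ is nonzero mod $p$ by Lemma \ref{lem 3.4}, and $g_j(\zz;w_0(\la))\not\equiv 0\pmod p$ when $w_0(\la)\ge 1$ follows from the same Lucas-theorem argument: the coefficient of $z_1^kz_2^\ell$ in $g_1(\zz;w)$ equals $(-1)^{w-1}\binom{(p-3)/2}{k}\binom{(p-1)/2}{\ell}$ for $k+\ell=w-1$, and for $1\le w\le p-1$ one can always choose such $(k,\ell)$ with $k\le (p-3)/2$ and $\ell\le (p-1)/2$, yielding a nonzero binomial product (and similarly for $g_2$). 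The only mildly delicate step in the whole argument is the boundary case $w_0(\la)=0$, where one must check that both sides collapse simultaneously; beyond that bookkeeping point, the proof is a routine adaptation of Lemma \ref{lem 3.4}.
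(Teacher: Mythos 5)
Your proof is correct and follows essentially the same route as the paper: the paper's own argument for Lemma \ref{lem 3.5} is a two-line remark that leans on the mod-$p$ factorization \eqref{mop} established for Lemma \ref{lem 3.4}, and your write-up simply supplies the details it leaves implicit (the adapted factorization of $\Psi_{s,j}$, the base-$p$ digit-uniqueness extraction of the $t^{p^s-1}$ coefficient, the boundary case $w_0(\la)=0$ where both sides of \eqref{Tsh1} vanish, and the Lucas-type nonvanishing of $g_j$). All of these checks are accurate, so this is a faithful, more complete version of the intended proof.
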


\begin{proof} 
If $\la$ is not divisible by $p$, then $w_0(\la)>0$. Then $g_j(\zz;w_0(\la))$ is nonzero modulo $p$.
Also,  the polynomials $h(\zz;w_i(\la))$ are nonzero modulo $p$.
\end{proof}

\section{Dwork-type congruences}
\label{sec 4}

In this section we apply results from \cite{V5} to 
obtain congruences relating the functions $I_s(\zz;\la)$, $T_s(\zz;\la)$ in $\zz$\,\ for different
$s$. This type of congruences was originated by B.\,Dwork in \cite{Dw}, see also, for example,
\cite{Me, MeV, VZ1, VZ2}.

\medskip

A congruence $F(x)\equiv G(x)\pmod{p^s}$ for two polynomials in some variables $x$ with integer coefficients
 is understood 
as the divisibility by $p^s$ of all coefficients of $F(x)-G(x)$.

\vsk.2>

Let $F_1(x)$, $F_2(x)$, $G_1(x)$, $G_2(x)$ be polynomials 
such that  $F_2(x)$, $G_2(x)$  are  both nonzero  modulo~$p$.
Then the congruence
$F_1(x)/F_2(x)\equiv G_1(x)/G_2(x)$ modulo $p^s$
is understood as the congruence
\bea
F_1(x)G_2(x)\equiv G_1(x)F_2(x) \pmod{p^s}\,.
\eea

\vsk.2>

\medskip

Recall the master polynomial
\bea
\Phi_s(t;\zz;\la)=t^{(p^s-\la)/2}(t-z_1)^{(p^s-1)/2}(t-z_2)^{(p^s-1)/2},
\eea
in particular,
\bea
\Phi_1(t;\zz;1)=t^{(p-1)/2}(t-z_1)^{(p-1)/2}(t-z_2)^{(p-1)/2}.
\eea
For $\la\in \La_e$ and $s>e$ we have
\bean
\label{rl}
\Phi_s(t;\zz;\la) = \Phi_e(t;\zz;\la)\Phi_1(t;\zz;1)^{p^{e}+p^{e+1}+\dots+p^{s-e-1}},
\eean
In particular, we have
\bea
\Phi_{s-e}(t;\zz;1) = \Phi_1(t;\zz;1)^{1+p+\dots +p^{s-e-1}}\,.
\eea

\noindent
For $\la\in\La_e$ and $s\geq e$,
 the Newton polytope of $\Phi_s(t;\zz;\la)$ with respect to the variable $t$ is the interval
$\big[\frac{p^s-\la}2, \,\frac{p^s-\la}2+p^s-1\big]$. 
\bean
\label{NPc}
\on{For\, a\, positive\,integer}\, k, \,\on{ the \,point}\,kp^s-1\,
\on{lies\, in\, this\, interval\, only\, if}\,  k=1.
\eean

\vsk.2>
Recall that 
$T_s(\zz;\la) = \{\Phi_s(t;\zz;\la)\}_s$\, is
the coefficient of $t^{p^s-1}$ in $\Psi_s(t;\zz;\la)$.
\bean
\label{nonz}
\on{For}\, \la\in\La_e,\, \on{the\,polynomial}\,
T_s(\zz;\la) \,
\on{is\, nonzero\,modulo}\,p\,,
\eean
by Lemma \ref{lem 5.4}.

\vsk.2>
In \cite{V5}, certain congruences were proved for a sequence of polynomials 
 like $\Phi_s(\zz;\la)$, $s\geq e$, with  properties like \eqref{rl}, \eqref{NPc},  \eqref{nonz}.
In the case of the polynomials $\Phi_s(\zz;\la)$, $s\geq e$, the congruences in \cite{V5}  say the following.

\begin{thm}
\label{thm 1.6}

Let  $e\in\Z_{>0}$,\, $\la\in\La_e$.
 
\begin{enumerate}

%\item[\textup{(i)}] For $e\leq s$ we have
%\bea
%T_s(\zz;\la) \equiv T_e(\zz;\la) \si^{e}(T_1(\zz;1))\si^{e+1}(T_1(\zz;1))\cdots 
%\si^{s-1}(T_1(\zz;1)) \pmod{p}.
%\eea

%\item[\textup{(ii)}] 
%We have $\frac{T_{e+1}(\zz;\la)}{ \si^{e}(T_{1}(\zz;1))}
%\equiv
%T_{e}(\zz;\la) \pmod{p}$, and for $e+1<s$ we have
%\bean
%\label{s cong}
%\frac{T_s(\zz;\la)}{ \si^{e}(T_{s-e}(\zz;1))}
%\equiv
%\frac{T_{s-1}(\zz;\la)}{ \si^{e}(T_{s-e-1}(\zz;1))}
%\pmod{p^{s-e}}.
%\eean
 
 \item[\textup{(i)}] 
For $j\in \{1,2\}$ denote $D_j = \frac\der {\der z_j}$. Then for
$s>e$ we have
\bean
\label{Der}
\frac{D_j(T_s(\zz;\la))}{ T_{s}(\zz;\la)}
\equiv
\frac{D_j(T_{s-1}(\zz;\la))}{ T_{s-1}(\zz;\la)}
\pmod{p^{s-e}}.
\eean

 \item[\textup{(ii)}] 
For $i,j\in \{1,2\}$ and 
$s>e$ we have
\bean
\label{DDer}
\frac{D_i(D_j(T_s(\zz;\la)))}{ T_{s}(\zz;\la)}
\equiv
\frac{D_i(D_j(T_{s-1}(\zz;\la)))}{ T_{s-1}(\zz;\la)}
\pmod{p^{s-e}}.
\eean

\end{enumerate}

\end{thm}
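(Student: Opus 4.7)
The plan is to verify the hypotheses of the general Dwork-type congruence theorems of \cite{V5} for the sequence $\{\Phi_s(t;\zz;\la)\}_{s\ge e}$ and then invoke those theorems. The paper has already signaled that the three structural inputs required by \cite{V5} are precisely \eqref{rl}, \eqref{NPc}, and \eqref{nonz}: the multiplicative recursion $\Phi_s=\Phi_{s-1}\cdot\Phi_1(t;\zz;1)^{p^{s-1}}$ for $s>e$ is a restatement of \eqref{rl}, the Newton-polytope restriction \eqref{NPc} is elementary from the $t$-degree interval $[(p^s-\la)/2,\,(p^s-\la)/2+p^s-1]$, and the non-vanishing of $T_s(\zz;\la)$ modulo $p$ is Lemma \ref{lem 3.4}. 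So modulo the machinery of \cite{V5}, the task reduces to verifying hypotheses and tracking how the derivatives $D_j$ interact with them.

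Next I would use the trivial but essential observation that coefficient extraction $\{\cdot\}_s$ commutes with the derivatives $D_j=\der/\der z_j$ and $D_iD_j$, since these act only on $\zz$. Consequently
\[
D_jT_s(\zz;\la)=\{D_j\Phi_s(t;\zz;\la)\}_s,\qquad D_iD_jT_s(\zz;\la)=\{D_iD_j\Phi_s(t;\zz;\la)\}_s.
\]
Combined with the identity $D_j\Phi_s=-\tfrac{p^s-1}{2}\,\Phi_s/(t-z_j)$ and its iterates under the Leibniz rule, this expresses the numerators of \eqref{Der} and \eqref{DDer} as coefficients of $t^{p^s-1}$ in rational expressions of the form $(\text{scalar})\cdot\Phi_s/\prod(t-z_k)$. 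These expressions share the multiplicative and Newton-polytope structure of $\Phi_s$ itself, so the Dwork machinery of \cite{V5} applies to them uniformly.

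The core of the argument then proceeds by peeling off the Frobenius-like factor $\Phi_1(t;\zz;1)^{p^{s-1}}$ via a Fermat--Frobenius congruence of the form
\[
\Phi_1(t;\zz;1)^{p^{s-1}}\equiv \Phi_1(t^{p^{s-1}};\zz^{p^{s-1}};1)\pmod{p^{s-e}},
\]
and then noting, by \eqref{NPc}, that only the $k=1$ Frobenius monomial can contribute to the coefficient of $t^{p^s-1}$. This yields congruences of the shape $T_s\equiv T_{s-1}\cdot(\text{correction})\pmod{p^{s-e}}$, and parallel identities for $D_jT_s$ and $D_iD_jT_s$. Dividing, which is legitimate because $T_s$ and $T_{s-1}$ are units modulo $p$ by \eqref{nonz}, produces the stability statements \eqref{Der} and \eqref{DDer}.

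The main obstacle I expect is the careful $p$-adic bookkeeping that produces exactly the modulus $p^{s-e}$: one must establish the Fermat--Frobenius congruence to the correct power of $p$, and then verify that the extra rational factors $(t-z_j)^{-1}$ or $(t-z_j)^{-1}(t-z_i)^{-1}$ introduced by the derivatives do not degrade that modulus nor disturb the Newton-polytope hypothesis \eqref{NPc}. This bookkeeping is precisely the content of \cite{V5}, so with the hypotheses aligned to that framework the conclusions \eqref{Der} and \eqref{DDer} follow.
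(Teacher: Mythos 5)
Your proposal is correct and follows essentially the same route as the paper: the paper's own proof consists precisely of noting that the sequence $\Phi_s(t;\zz;\la)$ satisfies the structural hypotheses \eqref{rl}, \eqref{NPc}, \eqref{nonz} and then citing Theorems 2.8 and 2.9 of \cite{V5} as giving (i) and (ii) as special cases. The additional sketch you give of the internal Frobenius-peeling mechanism is a reasonable outline of what happens inside \cite{V5}, but it is not reproduced in the paper and is not needed once the hypotheses are verified.
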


Statements (i-ii) are special cases of \cite[Theorems 2.8 and 2.9]{V5}.

\begin{cor}
\label{cor TI} 
Let $e\in\Z_{>0}$,\, $\la \in \La_e$,\, $s>e$, and $i,j\in \{1,2\}$. Then
\bean
\label{TI1}
\frac{I_{s,j}(\zz;\la)}{ T_{s}(\zz;\la)}
&\equiv&
\frac{I_{s-1,j}(\zz;\la)}{ T_{s-1}(\zz;\la)}
\pmod{p^{s-e}}\,,
\\
\label{TI2}
\frac{\frac{\der I_{s,j}}{\der z_i}(\zz;\la)}{ T_{s}(\zz;\la)}
&\equiv&
\frac{\frac{\der I_{s-1,j}}{\der z_i}(\zz;\la)}{ T_{s-1}(\zz;\la)}
\pmod{p^{s-e}}\,.
\eean
%in particular,
%\bean
%\label{TI3}
%\frac{I_{s,j}(\zz;\la)}{ T_{s}(\zz;\la)}
%&\equiv&
%\frac{I_{e,j}(\zz;\la)}{ T_{e}(\zz;\la)}
%\pmod{p}\,.
%\eean

\end{cor}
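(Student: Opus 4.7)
The plan is to reduce both congruences directly to Theorem \ref{thm 1.6} via the elementary identity \eqref{Ina}, which asserts
\begin{equation*}
(1-p^s)\, I_{s,j}(\zz;\la) \;=\; 2\, D_j T_s(\zz;\la),
\end{equation*}
and analogously for $s-1$. Since $p$ is odd, $2$ is a unit in $\Z/p^{s}\Z$, and since $1-p^s \equiv 1 \pmod{p^s}$, the identity collapses to the simple congruence $I_{s,j}(\zz;\la)\equiv 2\,D_j T_s(\zz;\la)\pmod{p^s}$, with an analogous congruence modulo $p^{s-1}$ for $I_{s-1,j}$. Both moduli are at least $p^{s-e}$ because $e\ge 1$, so these substitutions are available inside the targeted $p^{s-e}$-congruence.

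For \eqref{TI1}, I would unfold the quotient convention of Section \ref{sec 4}: the claim is the polynomial congruence
\begin{equation*}
I_{s,j}(\zz;\la)\, T_{s-1}(\zz;\la) \;\equiv\; I_{s-1,j}(\zz;\la)\, T_{s}(\zz;\la) \pmod{p^{s-e}}.
\end{equation*}
Replacing $I_{s,j}$ by $2 D_j T_s$ and $I_{s-1,j}$ by $2 D_j T_{s-1}$ using the preceding paragraph, this reduces to
\begin{equation*}
2\, D_j T_s(\zz;\la)\cdot T_{s-1}(\zz;\la) \;\equiv\; 2\, D_j T_{s-1}(\zz;\la)\cdot T_s(\zz;\la) \pmod{p^{s-e}},
\end{equation*}
which is precisely Theorem \ref{thm 1.6}(i) expressed in the same quotient notation (multiplied through by $2$, a unit).

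For \eqref{TI2}, I would differentiate the identity $(1-p^s) I_{s,j} = 2 D_j T_s$ with respect to $z_i$ to obtain $(1-p^s)\, D_i I_{s,j}(\zz;\la)=2\,D_i D_j T_s(\zz;\la)$, whence $D_i I_{s,j}\equiv 2\,D_i D_j T_s \pmod{p^s}$, and similarly modulo $p^{s-1}$ for $D_i I_{s-1,j}$. Unfolding \eqref{TI2} by the same quotient convention and substituting, the statement becomes
\begin{equation*}
2\,D_i D_j T_s(\zz;\la)\cdot T_{s-1}(\zz;\la) \;\equiv\; 2\,D_i D_j T_{s-1}(\zz;\la)\cdot T_s(\zz;\la) \pmod{p^{s-e}},
\end{equation*}
which is Theorem \ref{thm 1.6}(ii). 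The argument is entirely mechanical; the only point requiring care is the arithmetic that $s\ge e+1$, ensuring the $p^s$- and $p^{s-1}$-congruences coming from \eqref{Ina} are sharp enough to be used inside a $p^{s-e}$-congruence, and that $T_s,T_{s-1}$ are nonzero modulo $p$ (Lemma \ref{lem 3.4}), so that the quotient notation is legitimate.
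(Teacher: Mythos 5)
Your proof is correct and follows exactly the paper's route: the paper's entire proof is the one-line remark that the corollary follows from formula \eqref{Ina} and Theorem \ref{thm 1.6}, and you have simply spelled out the substitution $I_{s,j}\equiv 2\,D_jT_s \pmod{p^s}$ (and its derivative) together with the bookkeeping that $s-1\ge s-e$. No issues.
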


The corollary follows from formula \eqref{Ina} and Theorem \ref{thm 1.6}.

\begin{thm}
\label{thm 6.3}

Let $e\in\Z_{>0}$. Let  $\la, \la+2\in\La_e$ and $s>2e$.
Then
\bean
\label{6.9}
\frac {I_s(\zz;\la+2)}{T_s(\zz;\la)}  \equiv 
\frac {I_{s-1}(\zz;\la+2)}{T_{s-1}(\zz;\la)} 
\pmod{p^{s-2e}}\,.
\eean

\end{thm}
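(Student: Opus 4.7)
The plan is to chain together three congruences: the qKZ relation of Theorem \ref{thm pqk} at level $s$, the Dwork-type congruence \eqref{TI1} of Corollary \ref{cor TI} which transfers from index $s$ to index $s-1$ while keeping $\la$ fixed, and the qKZ relation at level $s-1$ run in reverse. All three inputs are available under the hypotheses $\la,\la+2\in\La_e$ and $s>2e$; in particular $s-1>e$ since $s>2e\geq e+1$.

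First I would apply Theorem \ref{thm pqk} twice, at indices $s$ and $s-1$, obtaining
\be
I_s(\zz;\la+2) \equiv K(\zz,\la)\,I_s(\zz;\la) \pmod{p^{s-e}}
\ee
and
\be
I_{s-1}(\zz;\la+2) \equiv K(\zz,\la)\,I_{s-1}(\zz;\la) \pmod{p^{s-1-e}}.
\ee
Dividing through by $T_s(\zz;\la)$ and $T_{s-1}(\zz;\la)$ respectively -- legitimate because these polynomials are nonzero modulo $p$ by Lemma \ref{lem 3.4} -- and interposing \eqref{TI1} applied with parameter $\la$, one gets the chain
\be
\frac{I_s(\zz;\la+2)}{T_s(\zz;\la)} \equiv K(\zz,\la)\,\frac{I_s(\zz;\la)}{T_s(\zz;\la)} \equiv K(\zz,\la)\,\frac{I_{s-1}(\zz;\la)}{T_{s-1}(\zz;\la)} \equiv \frac{I_{s-1}(\zz;\la+2)}{T_{s-1}(\zz;\la)},
\ee
where the first and third congruences come from the two applications of Theorem \ref{thm pqk} above and the middle one from \eqref{TI1}.

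The main obstacle, and the reason the conclusion has modulus $p^{s-2e}$ rather than the $p^{s-e}$ appearing in the input congruences, is tracking the precision loss caused by multiplication by $K(\zz,\la)$. Since $K(\zz,\la)$ carries $\la$ in each denominator and $\la\in\La_e$ can have $p$-adic valuation as large as $e-1$ (for instance $\la=\pm p^{e-1}$, an odd integer of absolute value less than $p^e$), multiplying a congruence modulo $p^n$ by $K(\zz,\la)$ yields only a congruence modulo $p^{n-(e-1)}$. Applied to the middle step this turns $p^{s-e}$ into $p^{s-2e+1}$; combined with the two qKZ relations at moduli $p^{s-e}$ and $p^{s-1-e}$, the minimum of the three is at least $p^{s-2e}$, which is the stated bound.
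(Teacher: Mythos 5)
Your proposal is correct and follows essentially the same route as the paper: the paper likewise chains the two instances of Theorem \ref{thm pqk} (at levels $s$ and $s-1$, with moduli $p^{s-e}$ and $p^{s-e-1}$) with the congruence obtained by multiplying \eqref{TI1} by $K(\zz,\la)$, attributing the drop to $p^{s-2e}$ to the factor $1/\la$ with $\la\in\La_e$. Your explicit accounting of the precision loss via $|\la|_p\geq p^{-(e-1)}$ is a slightly more detailed version of the paper's remark ``and the fact that $\la\in\La_e$,'' and the final bound $\min(s-e,\,s-2e+1,\,s-e-1)\geq s-2e$ matches the stated modulus.
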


\begin{proof}
Using formulas \eqref{TI1}, \eqref{TI2} and the fact that $\la\in\La_e$, we obtain
%we  have
%\bean
%\frac {I_{s}(\zz;\la)}{T_{s}(\zz;\la)} \equiv
%\frac {I_{s-1}(\zz;\la)}{T_{s-1}(\zz;\la)}
%\pmod{p^{s-e}}
%\eean
%Since $\la\in\La_e$ we conclude that
\bea
\phantom{aa}
&
\frac1{\la} \begin{bmatrix}
\frac{\la+1}{z_1} &\frac 1{z_1}
\\
\frac 1{z_2}&\frac{\la+1}{z_2}
\end{bmatrix} \frac {I_{s}(\zz;\la)}{T_{s}(\zz;\la)}\, 
\equiv
\frac1{\la} \begin{bmatrix}
\frac{\la+1}{z_1} &\frac 1{z_1}
\\
\frac 1{z_2}&\frac{\la+1}{z_2}
\end{bmatrix} \frac { I_{s-1}(\zz;\la)}{T_{s-1}(\zz;\la)}\,\, \pmod{p^{s-2e}}.
\eea
Congruence \eqref{f pqk} implies the congruences:
\bea
 \frac {I_s (\zz;\la+2)}{T_s(\zz;\la)}\,
&\equiv&
\frac1{\la} \begin{bmatrix}
\frac{\la+1}{z_1} &\frac 1{z_1}
\\
\frac 1{z_2}&\frac{\la+1}{z_2}
\end{bmatrix} \frac {I_s(\zz;\la)}{T_s(\zz;\la)}\, \pmod{p^{s-e}},
\\
\phantom{aa}
 \frac {I_{s-1} (\zz;\la+2)}{T_{s-1}(\zz;\la)}
&\equiv&
\frac1{\la} \begin{bmatrix}
\frac{\la+1}{z_1} &\frac 1{z_1}
\\
\frac 1{z_2}&\frac{\la+1}{z_2}
\end{bmatrix} \frac {I_{s-1}(\zz;\la)}{T_{s-1}(\zz;\la)}\, \pmod{p^{s-e-1}}.
\eea
These three congruences  imply congruence \eqref{6.9}.
\end{proof}

\section{Convergence}
\label{sec 5}

\subsection{Unramified extensions of $\Q_p$}

We fix  an algebraic closure $\overline{\Q_p}$ of $\Q_p$.
For every $m$, there is a unique unramified extension of $\Q_p$ in 
 $\overline{\Q_p}$ of degree $m$, denoted by $\Q_p^{(m)}$.
This can be obtained by attaching to $\Q_p$ a primitive root of $1$ of order $p^m-1$.
The norm $|\cdot|_p$ on $\Q_p$ extends to a norm $|\cdot|_p$ on 
$\Q_p^{(m)}$.
Let 
\bea
\Z_p^{(m)} = \{ a\in \Q_p^{(m)} \mid |a|_p\leq 1\}
\eea
denote the ring of integers in $\Q_p^{(m)}$. The ring $\Z_p^{(m)}$
has the unique maximal ideal 
\bea
\mathbb M_p^{(m)} = \{ a\in \Q_p^{(m)} \mid |a|_p <1\},
\eea
such that $\mathbb Z_p^{(m)}\big/ \mathbb M_p^{(m)}$ is isomorphic to the finite field
$\F_{p^m}$.

For every $t\in\F_{p^m}$ there is a unique $\tilde t\in \mathbb Z_p^{(m)}$ that is a lift of $t$ and such that 
$\tilde t^{p^m}=\tilde t$. The element $\tilde t$ is called the Teichmuller lift of $t$.

\subsection{Domain $\frak D_B$}
For $t\in\F_{p^m}$ and $r>0$ denote
\bea
D_{t,r} = \{ a\in \Z_p^{(m)}\mid |a-\tilde t|_p<r\}\,.
\eea
We have the partition
\bea
\Z_p^{(m)} = \bigcup_{t\in\F_{p^m}} D_{t,1}\,.
\eea
Recall  $\zz=(z_1,z_2)$. For $B(\zz) \in \Z[\zz]$, define
\bea
\frak D_B \ =\ \{ \aa\in (\Z_p^{(m)})^2\,  \mid \  |B(\aa)|_p=1\} .
\eea
Let $\bar B(\zz)$ be the projection of $B(\zz)$ to $\F_p[\zz]\subset \F_{p^m}[\zz]$.
Then $\frak D_B$ is the union of unit polydiscs,
\bea
\frak D_B = \bigcup_{\substack{t_1,t_2\in \F_{p^m}\\  \bar B(t_1,t_2)\ne 0}} \  D_{t_1,1}\times D_{t_2,1}\,.
\eea

%For a polynomial $F(w)$ in some variables $w$ we define $\si^s(F(w))=F(w^{p^s})$.

\begin{lem}
\label{lem 7.1}

For any nonnegative integer $k$ we have
\begin{align}
\notag
\{ \aa\in (\Z_p^{(m)})^2 \mid \ |B(\aa^{p^k})|_p=1\}
&=\bigcup_{\substack{t_1,t_2\in \F_{p^m}\\  \bar B(t_1^{p^k},t_2^{p^k})\ne 0}} \  D_{t_1,1}\times  D_{t_2,1} =
\\
\notag
&=\bigcup_{\substack{t_1,t_2\in F_{p^m}\\  \bar B(t_1,t_2)\ne 0}} \  
D_{t_1,1}\times D_{t_2,1} =
 \frak D_B \,.
\end{align}
\qed
\end{lem}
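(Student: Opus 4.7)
The plan is to verify the four claimed sets are equal by breaking the chain into three pieces; throughout, I would exploit the partition of $(\Z_p^{(m)})^2$ into the unit polydiscs $D_{t_1,1}\times D_{t_2,1}$ indexed by $(t_1,t_2)\in\F_{p^m}^2$, which are simply the fibers of the reduction map $\Z_p^{(m)}\to\Z_p^{(m)}/\mathbb{M}_p^{(m)}\cong\F_{p^m}$.

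For the rightmost equality (the union over $(t_1,t_2)$ with $\bar B(t_1,t_2)\ne 0$ equals $\frak D_B$), nothing new needs to be done: this is precisely the decomposition of $\frak D_B$ into unit polydiscs displayed immediately before the lemma.

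To handle the first equality, I would fix $\aa\in D_{t_1,1}\times D_{t_2,1}$, so that the reduction of $a_i$ mod $\mathbb{M}_p^{(m)}$ is $t_i\in\F_{p^m}$; consequently the reduction of $a_i^{p^k}$ is $t_i^{p^k}$. Since $B\in\Z[\zz]$ reduces to $\bar B\in\F_p[\zz]\subset\F_{p^m}[\zz]$ and the reduction map is a ring homomorphism, $B(\aa^{p^k})$ reduces to $\bar B(t_1^{p^k},t_2^{p^k})$. Hence $|B(\aa^{p^k})|_p=1$ precisely when $\bar B(t_1^{p^k},t_2^{p^k})\ne 0$, and taking the union over the partition yields the first union on the nose.

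The middle equality will follow from the fact that the coefficients of $\bar B$ lie in $\F_p$ and so are fixed by the Frobenius $x\mapsto x^p$; iterating the Frobenius endomorphism of $\F_{p^m}$ gives
$$
\bar B(t_1^{p^k},t_2^{p^k}) \;=\; \bar B(t_1,t_2)^{p^k}.
$$
Because $\F_{p^m}$ is a field, this vanishes iff $\bar B(t_1,t_2)=0$, so the two index sets coincide and hence so do the two unions. I do not anticipate any real obstacle: the proof is essentially bookkeeping on reductions modulo $\mathbb{M}_p^{(m)}$ together with the Frobenius-fixed-point property of $\F_p$-coefficient polynomials.
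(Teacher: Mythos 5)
Your proof is correct, and it fills in exactly the routine verification the paper omits (the lemma is stated with no proof, being considered immediate from the polydisc partition displayed just before it): reduction modulo $\mathbb M_p^{(m)}$ identifies the condition $|B(\aa^{p^k})|_p=1$ on each polydisc with $\bar B(t_1^{p^k},t_2^{p^k})\ne 0$, and the Frobenius-invariance of the $\F_p$-coefficients of $\bar B$ together with the absence of zero divisors in $\F_{p^m}$ gives the middle equality. No gaps.
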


\begin{lem}
[{\cite[Lemma 6.1]{VZ2}}]
\label{lem nonempty}

Let $\bar B(\zz) \in  \F_p[\zz]$ be a nonzero polynomial  of degree $d$,
and $d+1< p^m$. Then the set
$ \{ \aa\in (\F_{p^m})^2\mid \bar B(\aa) \ne 0\}$
is nonempty. Moreover, there are at least
$\frac{p^{2m}-1}{p^m-1} (p^m-1-d) + 1 $ points
of\  $(\F_{p^m})^2$ where $\bar B(\zz)$ is nonzero.

\end{lem}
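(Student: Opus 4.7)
The plan is to reduce the lemma to the standard Schwartz--Zippel inequality applied in two variables over $\F_{p^m}$, which gives a stronger bound than the one stated and from which the lemma follows by an elementary algebraic manipulation.

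First I would establish the following: if $\bar B \in \F_{p^m}[z_1,z_2]$ is nonzero of total degree $d$, then the number of zeros of $\bar B$ in $(\F_{p^m})^2$ is at most $d\cdot p^m$. Write
\[
\bar B(z_1,z_2) = \sum_{i=0}^{i_0} b_i(z_2)\,z_1^i,
\]
where $i_0$ is the largest index with $b_{i_0}\not\equiv 0$; then $\deg b_{i_0}\le d-i_0$. Let $A\subset \F_{p^m}$ be the (finite) set of roots of $b_{i_0}$, so $|A|\le d-i_0$. For $a_2\notin A$, the univariate polynomial $\bar B(z_1,a_2)$ has degree exactly $i_0$ and hence at most $i_0$ roots; for $a_2\in A$, trivially at most $p^m$ roots. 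Summing gives
\[
\#\{\aa:\bar B(\aa)=0\} \;\le\; |A|\cdot p^m + (p^m-|A|)\cdot i_0 \;\le\; (d-i_0)\,p^m + i_0\,p^m \;=\; d\cdot p^m.
\]

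Second, I would convert this into the lower bound claimed in the lemma. The number of $\aa\in(\F_{p^m})^2$ with $\bar B(\aa)\ne 0$ is at least $p^{2m}-d\cdot p^m = p^m(p^m-d)$, which is strictly positive since the hypothesis $d+1<p^m$ gives $p^m-d\ge 2$; this proves nonemptiness. Using the identity $\tfrac{p^{2m}-1}{p^m-1}=p^m+1$, a direct expansion yields
\[
p^{2m}-d\cdot p^m \;\ge\; p^{2m}-dp^m-d \;=\; (p^m+1)(p^m-1-d)+1 \;=\; \frac{p^{2m}-1}{p^m-1}\,(p^m-1-d)+1,
\]
which is the claimed quantitative bound.

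I do not expect a real obstacle: both the Schwartz--Zippel-type counting step and the algebraic identity are elementary. The only minor subtlety is to confirm that the condition $d+1<p^m$ is precisely what makes the bound nontrivial (it guarantees $p^m-1-d\ge 1$), and to check that the choice of $i_0$ as the largest index with $b_{i_0}\not\equiv 0$ is legitimate because $\bar B\not\equiv 0$.
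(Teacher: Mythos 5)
Your argument is correct. Note first that the paper itself gives no proof of this lemma; it is quoted verbatim from \cite[Lemma 6.1]{VZ2}, so there is no in-paper argument to compare against. Your Schwartz--Zippel-style count is sound: writing $\bar B=\sum_{i\le i_0} b_i(z_2)z_1^i$ with $b_{i_0}\not\equiv 0$ and $\deg b_{i_0}\le d-i_0$ gives at most $dp^m$ zeros, hence at least $p^{2m}-dp^m$ nonzeros, and since $(p^m+1)(p^m-1-d)+1=p^{2m}-dp^m-d$, your bound is in fact stronger than the stated one by $d$; the nonemptiness follows from $d\le p^m-2$. The only cosmetic remark is that you should say explicitly that $d$ is the total degree (which is how the paper uses the lemma, e.g.\ in the degree count $\tfrac{3p^2-7p+8}{2}$ preceding its application). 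The shape of the stated bound, with the factor $\tfrac{p^{2m}-1}{p^m-1}=p^m+1$ equal to the number of affine lines through a fixed point of $(\F_{p^m})^2$, indicates that the cited source argues differently: pick a point $P$ with $\bar B(P)\ne 0$, restrict $\bar B$ to each of the $p^m+1$ lines through $P$, and count at least $p^m-1-d$ further nonzeros on each; that route needs nonemptiness as a separate preliminary step, whereas yours delivers both claims in one pass and with a marginally better constant.
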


\subsection{Domains of convergence}

Recall the polynomials $T_s(\zz;\la)$,
$I_{s,1}(\zz;\la)$, $I_{s,2}(\zz;\la)$ as well as the polynomials
$h(\zz;w)$, $g_{1}(\zz;w)$, $g_{2}(\zz;w)$. For $\la\in \La$, denote
\bea
H(\zz;\la) 
&=&
\prod_{w\in W(\la)} h(\zz;w),
\\
\frak D^{(m)}(\la)
&=&
 \{\aa \in (\Z_p^{(m)})^{2}  \mid  |H(\aa;\la)|_p=1\}.
\eea 
Let $\la\in\La$ be not divisible by $p$ (that is,\, $w_0(\la)>0$).
For $j=1,2$, denote
\bea
G_j(\zz;\la) 
&=&
g_j(\zz;w_0(\la))\prod_{w\in W(\la)} h(\zz;w),
\\
\frak D^{(m)}_*(\la)
&=& \{\aa \in (\Z_p^{(m)})^{2}  \mid  |G_1(\aa;\la)|_p=1\ \on{or}\ |G_2(\aa;\la)|_p=1\}.
\eea 
Denote $\frak C^{(m)} =  \{\aa \in (\Z_p^{(m)})^{2}  \mid  a_1a_2\ne 0\}$.
For $\la\in\La$ divisible by $p$, denote
\bea
\frak D^{(m)}_*(\la) 
= \frak D^{(m)}(\la) \cap \frak D^{(m)}_*(\la+2) \cap \frak C^{(m)}.
\eea 
Clearly, for any $\la\in\La$, we have
\bea
\frak D^{(m)}_*(\la)  \subset \frak D^{(m)}(\la), 
\eea
and
\bean
\label{inter}
\phantom{aaaa}
\bigcap\nolimits_{\la\in\La} \frak D^{(m)}_*(\la) \
 \mbox{\large$\supset$} \
\big\{\aa \in (\Z_p^{(m)})^{2}  \mid  \big| a_1a_2h(\aa;0)\prod_{w=1}^{p-1} h(\aa;w)g_1(\aa;w)g_2(\aa;w)\big|_p=1\big\}.
 \eean

\begin{lem}

If $m\geq 3$, then\ $\bigcap_{\la\in\La} \frak D^{(m)}_*(\la)$\
 is nonempty.

\end{lem}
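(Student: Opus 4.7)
My plan is to combine the containment \eqref{inter} with Lemma~\ref{lem nonempty}. Set
\begin{equation*}
B(\zz) \,:=\, z_1 z_2\, h(\zz;0)\prod_{w=1}^{p-1} h(\zz;w)\,g_1(\zz;w)\,g_2(\zz;w) \,\in\, \Z[\zz].
\end{equation*}
By \eqref{inter} it is enough to produce a point $\aa \in (\Z_p^{(m)})^2$ with $|B(\aa)|_p = 1$, i.e.\ a point of $\frak D_B$ in the notation of Section~5.2. The description of $\frak D_B$ recalled there as the union of unit polydiscs $D_{t_1,1} \times D_{t_2,1}$ over those $(t_1, t_2) \in (\F_{p^m})^2$ at which the reduction $\bar B \in \F_p[\zz]$ does not vanish reduces the problem further to exhibiting one such pair $(t_1, t_2)$.

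I would next verify that $\bar B$ is a nonzero polynomial in $\F_p[\zz]$. The Lucas-theorem argument at the end of the proof of Lemma~\ref{lem 3.4} shows that every factor $h(\zz;w)$, $w = 0, 1, \dots, p-1$, is nonzero modulo~$p$; the same argument, implicit in the proof of Lemma~\ref{lem 3.5}, shows that $g_j(\zz;w)$ is nonzero modulo~$p$ for $w \ge 1$ and $j = 1, 2$. Hence $\bar B \ne 0$ in $\F_p[\zz]$.

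It remains to verify the numerical hypothesis of Lemma~\ref{lem nonempty}, namely $\deg \bar B + 1 < p^m$. The polynomial $h(\zz;w)$ is homogeneous of degree $w$ and $g_j(\zz;w)$ is homogeneous of degree $w-1$ for $w \ge 1$, so
\begin{equation*}
\deg \bar B \,\le\, 2 + \sum_{w=1}^{p-1} \bigl(w + 2(w-1)\bigr) \,=\, 2 + \tfrac{(p-1)(3p-4)}{2} \,\le\, \tfrac{3p^2}{2}.
\end{equation*}
For any odd prime $p$ (so $p \ge 3$) and $m \ge 3$ we have $p^m \ge p^3 \ge 3p^2 > \tfrac{3p^2}{2} + 1 \ge \deg \bar B + 1$, whereupon Lemma~\ref{lem nonempty} produces a point $(t_1, t_2) \in (\F_{p^m})^2$ at which $\bar B$ does not vanish. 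Its Teichm\"uller lift $(\tilde t_1, \tilde t_2) \in (\Z_p^{(m)})^2$ then lies in $\bigcap_{\la \in \La} \frak D^{(m)}_*(\la)$, as required. The only delicate point is the degree estimate, which is exactly what forces the assumption $m \ge 3$: the bound $\tfrac{3p^2}{2}$ already exceeds $p^2$ for $p = 5$, so only $p^3$ dominates it uniformly over all odd primes.
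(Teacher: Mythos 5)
Your proof is correct and follows essentially the same route as the paper: reduce via the containment \eqref{inter} to a single polynomial $B$, check that $\bar B$ is nonzero modulo $p$ (via the Lucas-theorem computations behind Lemmas \ref{lem 3.4} and \ref{lem 3.5}), bound $\deg \bar B = \tfrac{3p^2-7p+8}{2}$ by something below $p^3-1$, and invoke Lemma \ref{lem nonempty}. The only difference is cosmetic — you spell out the nonvanishing of the factors and use the slightly coarser bound $\tfrac{3p^2}{2}$ where the paper states the exact degree.
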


\begin{proof}
We have $\deg_\zz h(\zz;w) = w$ and 
$\deg_\zz g_1(\zz;w) = \deg_\zz g_2(\zz;w) = w-1$.
Hence the polynomial in \eqref{inter} has degree 
$ \frac{3p^2-7p+8}2<p^3-1$. The lemma  follows from 
Lemma \ref{lem nonempty}. 
\end{proof}

\begin{thm}
\label{thm coKZ}
 For $e\in\Z_{>0}$ and $\la\in\La_e$, we have the following statements.

\begin{enumerate}
\item[$(\on{i})$]

%Let $\la\in\La_e$. Then t
The  sequence of column vectors,
$\Big(\frac {I_{s}(\zz;\la)}{T_{s}(\zz;\la)} \Big)_{s\geq e}$\,\,,
whose entries are rational functions in $\zz$ regular on  $\frak D^{(m)}(\la)$,
uniformly converges on $\frak D^{(m)}(\la)$ as $s\to\infty$ to a
a vector, whose entries are analytic functions on $\frak D^{(m)}(\la)$.
The vector will  be denoted by $\mc I(\zz;\la) = (\mc I_1(\zz;\la), \mc I_2(\zz;\la))$.

\item[$(\on{ii})$]

%Let $\la\in\La_e$. Then t
The  sequence of column vectors
$\Big(\frac {I_s(\zz;\la+2)}{T_s(\zz;\la)} \Big)_{s> 2e}$\,,
whose entries are rational functions in $\zz$ regular on  $\frak D^{(m)}(\la)$,
uniformly converges on $\frak D^{(m)}(\la)$ as $s\to\infty$ to 
a vector, whose entries are analytic functions on $\frak D^{(m)}(\la)$.
The vector will  be denoted by
\\
 $\tilde{\mc I}(\zz;\la+2) = (\tilde{\mc I}_1(\zz;\la+2), \tilde{\mc I}_2(\zz;\la+2))$.

\item[$(\on{iii})$]
For  $j=1,2$,  the sequence of column vectors,
$\Big(\frac {\frac{\der I_{s}}{\der z_j}(\zz;\la)}{T_{s}(\zz;\la)} \Big)_{s\geq e}$\,,
whose entries are rational functions in $\zz$ regular on  $\frak D^{(m)}(\la)$,
uniformly converges on $\frak D^{(m)}(\la)$ as $s\to\infty$ to 
a vector, whose entries are analytic functions on $\frak D^{(m)}(\la)$.
The vector will  be denoted by $\mc I^{(i)}(\zz;\la) = (\mc I_1^{(i)}(\zz;\la), \mc I_2^{(i)}(\zz;\la))$.

\end{enumerate}

\end{thm}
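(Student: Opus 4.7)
The plan is to prove all three parts by the same $p$-adic Cauchy-sequence argument, with the Dwork-type congruences of Section \ref{sec 4} providing the successive differences. Each sequence in (i)--(iii) has the form (polynomial)$/T_s(\zz;\la)$, and the congruences in Corollary \ref{cor TI} and Theorem \ref{thm 6.3} tell us that consecutive terms are congruent modulo $p^{s-e}$ (respectively $p^{s-2e}$) as rational functions. Once I know that $T_s(\aa;\la)$ is a $p$-adic unit on $\frak D^{(m)}(\la)$ uniformly in $s$, these polynomial congruences immediately translate into uniform Cauchy estimates in the sup norm.

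The main preliminary step is therefore to show $|T_s(\aa;\la)|_p=1$ for all $\aa\in\frak D^{(m)}(\la)$ and all $s\geq e$. By Lemma \ref{lem 3.4}, $T_s(\zz;\la)\equiv\prod_{i=0}^{s-1}h(\zz^{p^i};w_i(\la))\pmod p$. The condition $|H(\aa;\la)|_p=1$ in the definition of $\frak D^{(m)}(\la)$, together with $H(\zz;\la)=\prod_{w\in W(\la)}h(\zz;w)$, forces each factor $h(\aa;w)$ with $w\in W(\la)$ to be a unit on $\frak D^{(m)}(\la)$. Since every $p$-ary digit $w_i(\la)$ lies in the finite set $W(\la)$, Lemma \ref{lem 7.1} then yields $|h(\aa^{p^i};w_i(\la))|_p=1$ for every $i$ and every $\aa\in\frak D^{(m)}(\la)$, so the product $T_s(\aa;\la)$ is a unit modulo $p$ and hence a unit in $\Z_p^{(m)}$.

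With this uniform unit bound in hand, the convergence in (i) is immediate. I would unwind the congruence \eqref{TI1} to the polynomial form $I_{s,j}T_{s-1}-I_{s-1,j}T_s\equiv 0\pmod{p^{s-e}}$, evaluate at $\aa\in\frak D^{(m)}(\la)$, and divide by the unit $T_s(\aa;\la)T_{s-1}(\aa;\la)$, obtaining $\bigl|I_{s,j}(\aa;\la)/T_s(\aa;\la)-I_{s-1,j}(\aa;\la)/T_{s-1}(\aa;\la)\bigr|_p\leq p^{-(s-e)}$, uniformly in $\aa$. So the sequence is uniformly Cauchy on $\frak D^{(m)}(\la)$. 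Part (iii) runs identically using the derivative congruence \eqref{TI2}, and part (ii) runs identically using Theorem \ref{thm 6.3}, yielding the weaker rate $p^{-(s-2e)}$ which still tends to zero.

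Finally, for analyticity of the limits, I would decompose $\frak D^{(m)}(\la)$ into its affinoid polydiscs $D_{t_1,1}\times D_{t_2,1}$. On each polydisc, $T_s$ is invertible in the corresponding Tate algebra, so $I_{s,j}/T_s$ is rigid analytic, and a uniform sup-norm limit of rigid analytic functions is again rigid analytic. The only non-routine step is the uniform unit bound for $T_s$; the heart of it is the observation that although the length of the $p$-ary expansion of $(p^s-\la)/2$ grows unboundedly with $s$, the set $W(\la)$ of digits that actually appear is fixed and finite, so only finitely many factors $h(\cdot;w)$ ever need to be controlled.
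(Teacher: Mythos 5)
Your proposal is correct and follows the same route as the paper: the paper's proof is a one-line appeal to the congruences \eqref{TI1}, \eqref{TI2}, \eqref{6.9}, and your Cauchy-sequence argument together with the unit bound $|T_s(\aa;\la)|_p=1$ (which the paper isolates as Lemma \ref{lem 5.4}(i), proved exactly by your digit argument via Lemmas \ref{lem 3.4} and \ref{lem 7.1}) is precisely the intended filling-in of details. You even pair the congruences with the correct parts ((ii) with \eqref{6.9}, (iii) with \eqref{TI2}), fixing a small ordering slip in the paper's citation.
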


\begin{proof} 
Parts (i), (ii), (iii) follow from the congruences of \eqref{TI1}, \eqref{TI2}, \eqref{6.9}, respectively.
\end{proof}

\subsection{Relations between limiting vectors}

\begin{lem}
\label{lem 7.6}

Let $\la\in\La$.
We have the following equations on $\frak D^{(m)}(\la)$\,:
\bean
\label{II}
\frac{\der \mc I}{\der z_i}(\zz;\la) 
&=&
 \mc I^{(i)}(\zz;\la) - \frac12  \mc I_i(\zz;\la) \mc I (\zz;\la)\,,
\\
\label{7.4}
\mc I^{(i)}(\zz;\la)
& =& 
H_i(\zz;\la)\mc I(\zz;\la),
\\
\label{7.5}
\frac{\der \mc I}{\der z_i}(\zz;\la) &=&
 \Big(H_i(\zz;\la)  - \frac 12 \mc I_i(\zz;\la)\Big) \mc I(\zz;\la),
%\eean
%We also have the equation on $\frak D^{(m)}(\la)$\,:
%\bean
\\
\label{7.6}
\tilde {\mc I}(\zz;\la+2) &=& K(\zz;\la)\mc I(\zz;\la).
\eean

\end{lem}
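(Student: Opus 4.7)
My plan is to derive each of the four identities by dividing an appropriate polynomial congruence from Sections \ref{sec 3}--\ref{sec 4} by the master coefficient $T_s(\zz;\la)$ and passing to the $p$-adic limit on $\frak D^{(m)}(\la)$, invoking Theorem \ref{thm coKZ} and Corollary \ref{cor TI} to identify the limits. The critical preliminary is the uniform bound $|T_s(\aa;\la)|_p = 1$ for all $\aa \in \frak D^{(m)}(\la)$ and all $s\geq e$. By Lemma \ref{lem 3.4},
\[
T_s(\zz;\la) \equiv \prod_{i=0}^{s-1} h\bigl(\zz^{p^i};w_i(\la)\bigr) \pmod p,
\]
and since $H(\zz;\la) = \prod_{w\in W(\la)} h(\zz;w)$ contains the factor $h(\zz;w_i(\la))$ for every index $i$, applying Lemma \ref{lem 7.1} factor-by-factor shows that every factor of the right-hand side is a unit at $\aa \in \frak D^{(m)}(\la)$, whence $|T_s(\aa;\la)|_p = 1$.

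For \eqref{II} I would apply the quotient rule to $I_s/T_s$ together with the gradient identity $\grad_\zz T_s = \tfrac{1-p^s}{2}\,I_s$ from \eqref{Ina}: since $\tfrac{1-p^s}{2} \to \tfrac12$ in $\Q_p$, the ratio $\der T_s/\der z_i\,/\,T_s$ converges on $\frak D^{(m)}(\la)$ to $\tfrac12\,\mc I_i(\zz;\la)$, and the quotient rule then yields $\der_i \mc I = \mc I^{(i)} - \tfrac12\,\mc I_i\,\mc I$. For \eqref{7.4} I would start from Theorem \ref{thm ps-dy} in the form $2z_i\,\der_i I_s \equiv H_i(\zz;\la)\,I_s \pmod{p^s}$; dividing by $T_s(\zz;\la)$, using $|T_s|_p=1$, and letting $s\to\infty$ yields the limiting identity relating $\mc I^{(i)}$ and $\mc I$ (the stated form of \eqref{7.4}). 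Equation \eqref{7.5} then follows by direct substitution of \eqref{7.4} into \eqref{II}. Finally \eqref{7.6} is obtained from Theorem \ref{thm pqk}: the congruence $I_s(\zz;\la+2) \equiv K(\zz;\la)\,I_s(\zz;\la) \pmod{p^{s-e}}$, divided by $T_s(\zz;\la)$ and sent to the limit $s\to\infty$, gives $\tilde{\mc I}(\zz;\la+2) = K(\zz;\la)\,\mc I(\zz;\la)$.

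The main obstacle will be the uniform unitarity $|T_s(\aa;\la)|_p=1$ on $\frak D^{(m)}(\la)$. For each $s\geq e$ one must verify that every factor $h(\bar\aa^{p^i};w_i(\la))$ of $T_s \bmod p$ is nonzero uniformly in $i$, which requires Lemma \ref{lem 7.1} (so that the Frobenius orbit $\bar\aa^{p^i}$ stays in the non-vanishing locus) together with the observation that the eventually periodic digit sequence $w_i(\la)$ takes values in the finite index set $W(\la)$, and the defining polynomial $H(\zz;\la)$ is precisely chosen to forbid the zeros of these factors. Once this uniform estimate is in hand, the four identities become formal consequences of $p$-adic continuity applied to the polynomial congruences already established.
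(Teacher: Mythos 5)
Your proposal is correct and follows essentially the same route as the paper: equation \eqref{II} comes from the quotient rule applied to $I_s/T_s$ together with \eqref{Ina} and the differentiated form of the Dwork congruence \eqref{TI1}, equation \eqref{7.4} from dividing \eqref{pdy} by $T_s$ and passing to the limit, \eqref{7.5} by substitution, and \eqref{7.6} from \eqref{f pqk}; the unitarity $|T_s(\aa;\la)|_p=1$ on $\frak D^{(m)}(\la)$ that you establish as a preliminary is exactly Lemma \ref{lem 5.4}(i), which the paper proves separately from Lemmas \ref{lem 3.4} and \ref{lem 7.1} in the same way.
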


\vsk.2>

\begin{proof}  
Differentiating the congruence in \eqref{TI1} with respect to $z_i$ we obtain
\bea
\frac{\frac{\der I_{s,j}}{\der z_i}}{ T_{s}}
- \frac{\frac{\der T_s}{\der z_i}}{ T_{s}}
\frac{I_{s,j}}{ T_{s}}
&\equiv&
\frac{\frac{\der I_{s-1,j}}{\der z_i}}{ T_{s-1}}
- \frac{\frac{\der T_{s-1}}{\der z_i}}{ T_{s-1}}
\frac{I_{s-1,j}}{ T_{s-1}}
\pmod{p^{s-e}}\,.
\eea
This congruence gives equation \eqref{II} as $s\to \infty$.
Equation \eqref{7.4}  follows from the congruences in \eqref{pdy}
after dividing by $T_s(\zz;\la)$ and taking the limit $s\to\infty$.
Equation \eqref{7.5} follows from equations \eqref{II} and \eqref{7.4}.

Equation \eqref{7.6} follows from  congruence \eqref{f pqk} after dividing by 
$T_s(\zz;\la)$ and taking the limit $s\to\infty$.
\end{proof}

\subsection{Limiting vectors}

%\subsection{Vectors $\mc I(\zz;\la)$, $\mc I(\zz;\la+2)$, and $\tilde{\mc I}(\zz;\la+2)$}

The vector-function 
$\tilde {\mc I}(\zz;\la+2) =(\tilde {\mc I}_1(\zz;\la+2),\tilde {\mc I}_2(\zz;\la+2))$  
is defined on $\frak D^{(m)}(\la)$, and the vector-function 
${\mc I}(\zz;\la+2) =( {\mc I}_1(\zz;\la+2), {\mc I}_2(\zz;\la+2))$  
is defined on $\frak D^{(m)}(\la+2)$, see Theorem \ref{thm coKZ}.

\begin{lem}
\label{lem pr}

The vector-functions 
$\tilde {\mc I}(\zz;\la+2)$ and ${\mc I}(\zz;\la+2)$ are proportional on
\linebreak
$\frak D^{(m)}(\la)\cap \frak D^{(m)}(\la+2)$, that is,
\bean
\label{7.7}
\tilde {\mc I}_1(\zz;\la+2) {\mc I}_2(\zz;\la+2)-
\tilde {\mc I}_2(\zz;\la+2) {\mc I}_1(\zz;\la+2) = 0.
\eean

\end{lem}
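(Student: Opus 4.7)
The plan is to observe that both vectors $\tilde{\mc I}(\zz;\la+2)$ and $\mc I(\zz;\la+2)$ arise as $p$-adic limits whose numerators are the same polynomial $I_s(\zz;\la+2)$; only the normalizing denominators differ ($T_s(\zz;\la)$ versus $T_s(\zz;\la+2)$). Proportionality is therefore automatic once we can multiply and take limits componentwise.

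More precisely, I would first note that on the domain $\frak D^{(m)}(\la)\cap\frak D^{(m)}(\la+2)$ both $T_s(\zz;\la)$ and $T_s(\zz;\la+2)$ are $p$-adic units, by Lemma \ref{lem 3.4} and the definitions of $\frak D^{(m)}(\la)$ and $\frak D^{(m)}(\la+2)$, so the rational functions
\[
\frac{I_{s,j}(\zz;\la+2)}{T_s(\zz;\la)}, \qquad \frac{I_{s,j}(\zz;\la+2)}{T_s(\zz;\la+2)}
\]
are regular there. By Theorem \ref{thm coKZ}(i,ii), the first converges uniformly to $\tilde{\mc I}_j(\zz;\la+2)$ (for $s>2e$) and the second converges uniformly to $\mc I_j(\zz;\la+2)$ (for $s\ge e$), where $e$ is chosen so that both $\la,\la+2\in\La_e$.

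Next I would compute the left-hand side of \eqref{7.7} by pushing the product inside the limits, which is legitimate by the uniform $p$-adic convergence:
\begin{align*}
\tilde{\mc I}_1(\zz;\la+2)\,\mc I_2(\zz;\la+2) - \tilde{\mc I}_2(\zz;\la+2)\,\mc I_1(\zz;\la+2)
&= \lim_{s\to\infty}\frac{I_{s,1}(\zz;\la+2)\,I_{s,2}(\zz;\la+2) - I_{s,2}(\zz;\la+2)\,I_{s,1}(\zz;\la+2)}{T_s(\zz;\la)\,T_s(\zz;\la+2)}.
\end{align*}
The numerator vanishes identically, so the limit is $0$, establishing \eqref{7.7}.

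There is no real obstacle here: the content of the lemma is essentially bookkeeping about which normalization was used to define each limit. The only point to double-check is the uniform-convergence-justified interchange of limits and products, which is immediate since $p$-adic multiplication is continuous and both factors are bounded analytic functions on the compact domain $\frak D^{(m)}(\la)\cap\frak D^{(m)}(\la+2)$. One could equivalently phrase the conclusion by saying that $\tilde{\mc I}(\zz;\la+2) = \rho(\zz)\,\mc I(\zz;\la+2)$ on the intersection, where $\rho(\zz) = \lim_{s\to\infty} T_s(\zz;\la+2)/T_s(\zz;\la)$ is a scalar analytic function; this viewpoint will be useful for the line bundle interpretation that follows.
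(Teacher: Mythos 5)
Your proof is correct and is essentially the paper's own argument: both limiting vectors arise from the same numerator $I_s(\zz;\la+2)$ and differ only in the normalizing denominator ($T_s(\zz;\la)$ versus $T_s(\zz;\la+2)$), so the $2\times 2$ determinant vanishes identically before passing to the limit. The paper states this in three sentences without writing out the determinant; your closing remark that $\rho(\zz)=\lim_{s\to\infty}T_s(\zz;\la+2)/T_s(\zz;\la)$ exists as an analytic function is an extra claim not needed for the lemma and not established by the cited congruences, so I would omit it or justify it separately.
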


\begin{proof}
To obtain  $\tilde {\mc I}(\zz;\la+2)$ we divide  the vector t $I_s(\zz;\la+2)$ by $T_s(\zz;\la)$ and take the limit
as $s\to\infty$.  To obtain  $ {\mc I}(\zz;\la+2)$ we divide  the
same vector $I_s(\zz;\la+2)$ by $T_s(\zz;\la+2)$ and take the limit
as $s\to\infty$. Hence the limits are proportional.
\end{proof}

\begin{lem}
\label{lem 5.4}

Let $e\in\Z_{>0}$,\, $\la\in\La_e$. 

\begin{enumerate}
\item[$\on{(i)}$]

Assume that $\aa \in  \frak D^{(m)}(\la)$.  Then  $|T_s(\aa;\la)|_p=1$
for any $s\geq e$.

\item[$\on{(ii)}$]
Assume that  $\la$ is not divisible by $p$ and $\aa \in  \frak D^{(m)}_*(\la)$. Then there exists $j\in\{1,2\}$ such that
 $|I_{s,j}(\aa;\la)|_p=1$ for any $s\geq e$.

\end{enumerate}
\end{lem}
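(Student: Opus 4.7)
The plan is to convert both assertions into a non-vanishing statement in $\F_{p^m}$, using the mod $p$ factorizations of Lemmas~\ref{lem 3.4} and~\ref{lem 3.5} and then the Frobenius automorphism of $\F_{p^m}$ to strip off the $p^i$ powers. Throughout, the hypothesis $s\ge e$ together with $\la\in\La_e$ ensures $\la\in\La_s$, so those lemmas apply.

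For (i), since $T_s(\zz;\la)\in\Z[\zz]$ and $\aa\in(\Z_p^{(m)})^2$, one automatically has $|T_s(\aa;\la)|_p\le 1$, so it suffices to prove that the reduction $\overline{T_s(\aa;\la)}\in\F_{p^m}$ is nonzero. Lemma~\ref{lem 3.4} already gives
\[
\overline{T_s(\aa;\la)}\,=\,\prod_{i=0}^{s-1}\bar h\bigl(\bar\aa^{p^i};w_i(\la)\bigr).
\]
Because each $\bar h(\zz;w)$ has coefficients in $\F_p$ and Frobenius fixes $\F_p$, the identity $\bar h(\zz^{p^i};w)=\bar h(\zz;w)^{p^i}$ holds in $\F_p[\zz]$; evaluating at $\bar\aa$ shows that the $i$th factor vanishes iff $\bar h(\bar\aa;w_i(\la))$ vanishes. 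By construction every $w_i(\la)$ lies in $W(\la)$, and the hypothesis $\aa\in\frak D^{(m)}(\la)$ is precisely that $|h(\aa;w)|_p=1$ for every $w\in W(\la)$. Hence every factor is a unit, and so is the product.

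Part (ii) follows by the same scheme applied to Lemma~\ref{lem 3.5}. If $|G_1(\aa;\la)|_p=1$ take $j=1$, otherwise $|G_2(\aa;\la)|_p=1$ and take $j=2$. In either case the corresponding reduction reads
\[
\overline{I_{s,j}(\aa;\la)}\,=\,\bar g_j\bigl(\bar\aa;w_0(\la)\bigr)\prod_{i=1}^{s-1}\bar h\bigl(\bar\aa;w_i(\la)\bigr)^{p^i},
\]
and the non-vanishing of each factor is exactly what is encoded in the assumption $|G_j(\aa;\la)|_p=1$; the hypothesis $p\nmid\la$ ensures $w_0(\la)>0$, which is what guarantees that $g_j(\zz;w_0(\la))$ is actually a nontrivial factor. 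The only real content of the argument is the Frobenius passage from $\bar h(\bar\aa^{p^i};w)$ to $\bar h(\bar\aa;w)^{p^i}$, which is where the $\F_p$-coefficient nature of the building blocks $h$ and $g_j$ is used; I view this as the main (though routine) ingredient and the rest is bookkeeping.
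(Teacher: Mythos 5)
Your proof is correct and follows essentially the same route as the paper, which simply cites Lemmas \ref{lem 3.4} and \ref{lem 3.5}; the Frobenius step $\bar h(\bar\aa^{p^i};w)=\bar h(\bar\aa;w)^{p^i}$ that you single out as the main ingredient is exactly the content of the paper's Lemma \ref{lem 7.1}. The only detail worth making explicit is that $|H(\aa;\la)|_p=1$ forces $|h(\aa;w)|_p=1$ for each individual $w\in W(\la)$ because each factor already has norm at most $1$, but this is immediate.
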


\begin{proof} The lemma  follows from Lemmas \ref{lem 3.4} and \ref{lem 3.5}.
\end{proof}

\begin{lem}
\label{lem 7.7}
${}$

\begin{enumerate}
\item[$(\on{i})$]
Let $\la\in\La$ be  not divisible by $p$ and
$\aa\in \frak D^{(m)}_*(\la)$.  Then there exists $j\in\{1,2\}$ such that 
 $|\mc I_j(\aa;\la)|_p =1$.

\item[$(\on{ii})$]

Let $\la+2\in\La$ be not divisible by $p$ and
 $\aa\in \frak D^{(m)}(\la) \cap
 \frak D^{(m)}_*(\la+2)$. Then there exists $j\in\{1,2\}$ such that
  $|\tilde{\mc I}_j(\aa;\la+2)|_p =1$.

\item[$(\on{iii})$]

Let  $\la\in \La$  be  divisible by $p$ and  $\aa\in \frak D^{(m)}_*(\la)$.
Then the vector $\mc I(\aa;\la)$ is nonzero.

\end{enumerate}

\end{lem}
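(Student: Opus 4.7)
The plan is to handle (i), (ii), (iii) in that order, deducing (iii) from (ii). The single arithmetic tool behind (i) and (ii) is an ultrametric promotion: if $x_s\to x$ in $\Q_p^{(m)}$ with $|x_s|_p=1$ for all large $s$, and $|x-x_{s_0}|_p<1$ for at least one such $s_0$, then $|x|_p=1$. The uniform convergence supplied by Theorem~\ref{thm coKZ} on $\frak D^{(m)}(\la)$ furnishes exactly the strict‐inequality condition at any prescribed point $\aa$ of the domain, so this is the starting point I will invoke in each pointwise argument.

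For (i), since $\la$ is not divisible by $p$, Lemma~\ref{lem 5.4}(ii) yields $j\in\{1,2\}$ and $e$ such that $|I_{s,j}(\aa;\la)|_p=1$ for all $s\geq e$; simultaneously Lemma~\ref{lem 5.4}(i) gives $|T_s(\aa;\la)|_p=1$, so the ratio $I_{s,j}(\aa;\la)/T_s(\aa;\la)$ has $p$-adic norm identically $1$. Theorem~\ref{thm coKZ}(i) identifies the limit as $\mc I_j(\aa;\la)$, and the ultrametric promotion delivers $|\mc I_j(\aa;\la)|_p=1$. Part (ii) follows the same template applied to $I_{s,j}(\aa;\la+2)/T_s(\aa;\la)$: the hypothesis $\aa\in\frak D^{(m)}_*(\la+2)$ combined with $\la+2\not\equiv 0\pmod p$ activates Lemma~\ref{lem 5.4}(ii) at $\la+2$ to pin the numerator to norm $1$, while $\aa\in\frak D^{(m)}(\la)$ activates Lemma~\ref{lem 5.4}(i) to pin the denominator; Theorem~\ref{thm coKZ}(ii) identifies the limit as $\tilde{\mc I}_j(\aa;\la+2)$.

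For (iii), the assumption that $p$ is odd and $p\mid\la$ forces $\la+2$ to be coprime to $p$, and the definition of $\frak D^{(m)}_*(\la)$ in this case gives $\aa\in\frak D^{(m)}(\la)\cap\frak D^{(m)}_*(\la+2)\cap\frak C^{(m)}$. Part (ii) therefore applies at $\la+2$ and produces a coordinate of $\tilde{\mc I}(\aa;\la+2)$ with $p$-adic norm $1$, so in particular $\tilde{\mc I}(\aa;\la+2)\ne 0$. The identity $\tilde{\mc I}(\aa;\la+2)=K(\aa;\la)\mc I(\aa;\la)$ from \eqref{7.6}, combined with $a_1a_2\ne 0$ (from $\frak C^{(m)}$) and $\la\ne 0$ as an integer, ensures that $K(\aa;\la)$ is a well-defined $2\times 2$ matrix over $\Q_p^{(m)}$ and forces $\mc I(\aa;\la)\ne 0$.

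I do not anticipate a real obstacle in this argument. The one mildly subtle point is to apply the ultrametric promotion at the given point $\aa$, rather than attempting a uniform statement on the whole domain, and to track which clause of the definition of $\frak D^{(m)}_*(\la)$ is in play according to whether $\la$ is or is not divisible by $p$.
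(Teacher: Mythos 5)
Your proposal is correct and follows essentially the same route as the paper: parts (i) and (ii) pin the numerator via Lemma \ref{lem 5.4}(ii) (equivalently, the nonvanishing of $G_j$ together with Lemmas \ref{lem 3.4}, \ref{lem 3.5}, \ref{lem 7.1}) and the denominator via Lemma \ref{lem 5.4}(i), then pass the unit norm to the limit ultrametrically, and part (iii) deduces nonvanishing of $\mc I(\aa;\la)$ from part (ii) applied to $\la+2$ together with identity \eqref{7.6} and $a_1a_2\ne 0$. The paper leaves the ultrametric limit step implicit, which you make explicit; otherwise the arguments coincide.
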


\begin{proof}
Under assumptions of part (i), there exists $j\in\{1,2\}$ such that 
$ |G_j(\aa;\la)|_p=1$. Then
$|I_{s,j}(\aa;\la)|_p = |T_{s}(\aa;\la)|_p =1$ for all large $s$
by Lemmas \ref{lem 3.4}, \ref{lem 3.5}, \ref{lem 7.1}, \ref{lem 5.4}.  Part (i) is proved.

Under assumptions of part (ii), there exists $j\in\{1,2\}$ such that 
$ |G_j(\aa;\la+2)|_p=1$. Then
$|I_{s,j}(\aa;\la+2)|_p = |T_{s}(\aa;\la)|_p =1$ for all large $s$
by Lemmas \ref{lem 3.4}, \ref{lem 3.5}, \ref{lem 7.1}, \ref{lem 5.4}.  Part (ii) is proved.

To prove part (iii) consider equation \eqref{7.6},
\bean
\label{tII}
\tilde {\mc I}(\aa;\la+2) = \frac1{\la} 
\begin{bmatrix}
\frac{\la+1}{a_1} &\frac 1{a_1}
\\
\frac 1{a_2}&\frac{\la+1}{a_2}
\end{bmatrix} \mc I(\aa,\la),
\eean
which holds for $\aa\in\frak D^{(m)}(\la)$.
By part (ii), the vector  $\tilde{\mc I}(\aa;\la+2)$ is nonzero 
 for $\aa\in\frak D^{(m)}(\la)\cap
\frak D^{(m)}_*(\la+2)$.
Since $\aa \in \frak D^{(m)}(\la) \cap \frak D^{(m)}_*(\la+2) \cap \frak C^{(m)}$
we have  $a_1a_2\ne 0$. Hence the matrix in \eqref{tII} is well defined, and therefore
 $\mc I(\aa;\la)$ is nonzero. 
\end{proof}

\subsection{Invariant line bundle} 

Denote  $\mc W=(\Q_p^{(m)})^2$,
The differential operators 
\bea
\mc D_i(\la) = \frac{\der}{\der z_i}-H_i(\zz;\la), \qquad i=1,2,
\eea 
define a connection   on 
the trivial bundle $\mc W\times (\Z_p^{(m)})^2 \times \La \to (\Z_p^{(m)})^2 \times \La$
called the dynamical connection. 

%The connection has singularities at the diagonal $\{z_1=z_2\}\times \La$  and the coordinate lines $\{z_1=0\}\times \La$ and $\{z_2=0\}\times \La$.

For any $\la\in\La$, we have a map of local sections of the bundle
 $\mc W\times (\Z_p^{(m)})^2\times \{\la\} \to (\Z_p^{(m)})^2\times \{\la\}$
to local sections of the bundle $\mc W\times (\Z_p^{(m)})^2\times \{\la+2\} 
\to (\Z_p^{(m)})^2\times \{\la+2\}$
defined by the formula,
\bean
\label{tau}
\tau \ :\  s(\zz) \ \mapsto \  K(\zz;\la) s(\zz).
\eean
We call the operator $\tau$ the \qKZ/ discrete connection on
 the trivial bundle 
$\mc W\times (\Z_p^{(m)})^2\times \La \to (\Z_p^{(m)})^2 \times \La$.

The dynamical and \qKZ/ connections are compatible. Namely, for $\la\in\La$ and a local section $s(\zz)$
of  $\mc W\times (\Z_p^{(m)})^2\times \{\la\} \to (\Z_p^{(m)})^2\times \{\la\}$ we have
\bea
\mc D_1(\la)\big(\mc D_2(\la) s(\zz)\big) 
&=&
\mc D_2(\la)\big(\mc D_1(\la) s(\zz)\big),
\\
\tau\big(\mc D_i(\la) s(\zz)\big)
&=&
\mc D_i(\la+2) \big(\tau s(\zz)\big),
\qquad i=1,2.
\eea

\vsk.2>

Denote 
\bea
\frak D^{(m)}[\La] 
&:=&
 \bigcup\nolimits_{\la\in\La} \frak D^{(m)}_*(\la) \times \{\la\} \ \subset \ (\Z_p^{(m)})^2\times \La.
\eea
For any $(\aa,\la)\in \frak D^{(m)}[\La]$,\ the vector 
$\mc I(\aa,\la)$ is nonzero by Lemma \ref{lem 7.7}.

\vsk.2>
For any $(\aa,\la)\in \frak D^{(m)}[\La]$,\
let $\mc L_{(\aa,\la)} \subset \mc W$ 
be the one-dimensional vector subspace generated by 
the  vector $\mc I(\aa,\la)$.   Then
\bea
\mc L := \bigcup\nolimits_{(\aa,\la)\in  
\frak D^{(m)}[\La]}\,\mc L_{(\aa,\la)}\times \{(\aa,\la)\} 
\,\to\, \frak D^{(m)}[\La]
\eea
is an analytic line subbundle of  the trivial bundle
$\mc W\times \frak D^{(m)}[\La]
 \to \frak D^{(m)}[\La]$

\begin{thm}
\label{thm inv} 

The line bundle $\mc L  \,\to\,  \frak D^{(m)}[\La]$
  is invariant with respect to the dynamical and  \qKZ/ connections.
More precisely,
\begin{enumerate}
\item[$\on{(i)}$]
if $s(\zz)$ is a local section of $\mc L $ over $\frak D^{(m)}_*(\la)\times \{\la\}$, then 
$\mc D_i(\la) s(\zz)$, $i=1,2$,  also are  local sections of $\mc L$ over 
 $\frak D^{(m)}_*(\la)\times \{\la\}$;

\item[$\on{(ii)}$]

if $s(\zz)$ is a local section of $\mc L$ over 
 $(\frak D^{(m)}_*(\la)\cap \frak D^{(m)}_*(\la+2))\times \{\la\}$, then 
 $\tau s(\zz)$,  is a local section of $\mc L$ over 
  $(\frak D^{(m)}_*(\la)\cap \frak D^{(m)}_*(\la+2)) \times \{\la+2\}$.

\end{enumerate}
\end{thm}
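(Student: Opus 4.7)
The plan is to exploit the fact that $\mc L$ is generated pointwise by the explicit vector $\mc I(\zz;\la)$, so any local section can be written as a scalar multiple of $\mc I$, after which the image under $\mc D_i(\la)$ or $\tau$ is controlled by the identities already established in Lemmas \ref{lem 7.6} and \ref{lem pr}.

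First I would record the trivialization: by Lemma \ref{lem 7.7}, at every point of $\frak D^{(m)}_*(\la)$ at least one coordinate of $\mc I(\zz;\la)$ has $p$-adic norm $1$, so in a neighborhood of any such point a local section $s(\zz)$ of $\mc L$ can be written uniquely as $s(\zz) = f(\zz)\,\mc I(\zz;\la)$ with $f$ an analytic scalar function (take $f = s_j/\mc I_j$ for a suitable $j\in\{1,2\}$). This reduces both parts of the theorem to checking that applying $\mc D_i(\la)$ and $\tau$ to $f\,\mc I(\zz;\la)$ yields a scalar multiple of $\mc I(\zz;\la)$ and $\mc I(\zz;\la+2)$, respectively.

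For part (i), the Leibniz rule gives
\begin{equation*}
\mc D_i(\la) s = \Bigl(\frac{\der}{\der z_i} - H_i(\zz;\la)\Bigr)\bigl(f\,\mc I\bigr) = \frac{\der f}{\der z_i}\,\mc I + f\Bigl(\frac{\der \mc I}{\der z_i} - H_i(\zz;\la)\,\mc I\Bigr).
\end{equation*}
Equation \eqref{7.5} of Lemma \ref{lem 7.6} identifies the last parenthesized expression with $-\tfrac12\,\mc I_i(\zz;\la)\,\mc I(\zz;\la)$, so
\begin{equation*}
\mc D_i(\la) s = \Bigl(\frac{\der f}{\der z_i} - \tfrac12 f\,\mc I_i(\zz;\la)\Bigr)\,\mc I(\zz;\la),
\end{equation*}
which is again a scalar multiple of $\mc I(\zz;\la)$, hence a local section of $\mc L$ over the same domain.

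For part (ii), I would compute $\tau s = f(\zz)\,K(\zz;\la)\,\mc I(\zz;\la)$ and rewrite this via equation \eqref{7.6} as $f(\zz)\,\tilde{\mc I}(\zz;\la+2)$. On the intersection $\frak D^{(m)}_*(\la)\cap \frak D^{(m)}_*(\la+2)$ the vector $\mc I(\zz;\la+2)$ is pointwise nonzero by Lemma \ref{lem 7.7}, and Lemma \ref{lem pr} supplies an analytic scalar $g(\zz)$ with $\tilde{\mc I}(\zz;\la+2) = g(\zz)\,\mc I(\zz;\la+2)$; thus $\tau s = f(\zz)g(\zz)\,\mc I(\zz;\la+2)$ is a section of $\mc L$ at parameter $\la+2$. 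The only step that requires slight care, rather than a real obstacle, is verifying that the scalars $f$ and $g$ are genuinely analytic on the stated $p$-adic polydiscs and not merely pointwise defined; this is immediate from the component-by-component nonvanishing in Lemma \ref{lem 7.7} combined with the uniform $p$-adic convergence of Theorem \ref{thm coKZ}.
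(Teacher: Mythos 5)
Your proof is correct and takes essentially the same route as the paper's: you write a local section as an analytic scalar multiple of $\mc I(\zz;\la)$, use equation \eqref{7.5} of Lemma \ref{lem 7.6} for part (i) (the paper uses \eqref{II} and \eqref{7.4} separately, of which \eqref{7.5} is the combination), and use \eqref{7.6} together with Lemmas \ref{lem pr} and \ref{lem 7.7} for part (ii). Your explicit justification that the scalars $f$ and $g$ are analytic (via the unit-norm component from Lemma \ref{lem 7.7}) is a point the paper leaves implicit, but it is not a different argument.
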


\begin{proof} 

Let $(\aa,\la)\in  \frak D^{(m)}_*(\la)\times \{\la\}$. 
 Let $c(\zz)$ be a scalar analytic function at $\aa$.
Consider the local section  $c(\zz) \mc I(\zz;\la)$ of $\mc L$ at  $(\aa,\la)$. Then
\bea
\mc D_i(\la) \big(c(\zz) \mc I(\zz;\la)\big)
&=&
 -\, cH_i \mc I  +c\frac{\der \mc I}{\der z_i}  + \frac{\der c}{\der z_i}\mc I
\\
&=&
 - \,c H_i \mc I +c\Big( \mc I^{(i)} - \frac12 \mc I_i \mc I\Big) 
+   \frac{\der c}{\der z_i}\mc I
\\
&=&
 - \,cH_i \mc I  +c \Big( H_i  \mc I  - \frac12 \mc I_i \mc I\Big) 
+  \frac{\der c}{\der z_i}\mc I
\\
&=&
  \Big(-  \frac{c}2 \mc I_i  
+  \frac{\der c}{\der z_i}\Big)\mc I \,.
\eea
Here we used Lemma \ref{lem 7.6}.
Clearly, the last expression is a local section of  $\mc L$ at $(\aa,\la)$.
Part (i) is proved.

By definition of $\tau$, we have  $\tau \big(c(\zz) \mc I(\zz;\la)\big) = c(\zz)K(\zz;\la)\mc I(\zz;\la)$.
  We also have the equality
\bea
c(\zz)K(\zz;\la)  \mc I(\zz;\la) = c(\zz) \tilde{\mc I}(\zz;\la+2),
\eea
which holds on  $\frak D^{(m)}(\la)$,
by Lemma \ref{lem 7.6}.   The vectors
$\tilde {\mc I}(\zz;\la+2)$  
and
${\mc I}(\zz;\la+2)$
  are proportional on $\frak D^{(m)}(\la)\cap \frak D^{(m)}(\la+2)$, by Lemma \ref{lem pr}.
For the smaller set $\frak D^{(m)}_*(\la)\cap \frak D^{(m)}_*(\la+2)$,
the initial vector vector  $ {\mc I}(\zz;\la)$ and the resulting vector  ${\mc I}(\zz;\la+2)$ are
both nonzero, by Lemmas \ref{lem 5.4} and \ref{lem 7.7}. This proves part (ii).
\end{proof}

\subsection{Special points}

\begin{lem}
\label{lem sp}
The points $\big(0,1;1\big)$, $\big(1,0;1\big)$,
$\big(1,1;1\big)$ belong to
$\frak D^{(m)}_*(1) \times \{1\}\subset \frak D^{(m)}[\La]$.
\end{lem}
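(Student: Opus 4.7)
\smallskip

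The plan is a direct verification using the explicit formulas for $h$ and $g_j$, since $\la=1$ is the simplest case. First I would record the $p$-adic expansion of $-\la/2$ for $\la=1$: because $p$ is odd, one has $-1/2 = \frac{p-1}{2}(1+p+p^2+\cdots)$, so $w_i(1)=d$ for all $i\ge 0$, where $d := (p-1)/2$, and consequently $W(1)=\{d\}$. Thus by the definition of $\frak D^{(m)}_*(1)$ the task reduces to checking that at each of the three points $(0,1)$, $(1,0)$, $(1,1)$ (all of which are Teichm\"uller lifts in $\Z_p\subset\Z_p^{(m)}$) one has
\[
|g_j(\aa;d)\cdot h(\aa;d)|_p = 1 \quad\text{for some } j\in\{1,2\}.
\]

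Next I would write down the explicit polynomial expansions. From the definitions,
\[
h(\zz;d)=(-1)^d\!\!\sum_{k+l=d}\!\binom{d}{k}\binom{d}{l}z_1^kz_2^l,\quad
g_1(\zz;d)=(-1)^{d-1}\!\!\sum_{k+l=d-1}\!\binom{d-1}{k}\binom{d}{l}z_1^kz_2^l,
\]
with an analogous formula for $g_2$ obtained by swapping the two binomial coefficients. These are just the coefficients of $t^{p-1}$ in the defining products.

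Then I would evaluate at the three points. At $(0,1)$, only the monomial with $k=0$ survives, giving $h((0,1);d)\equiv(-1)^d$ and $g_1((0,1);d)\equiv(-1)^{d-1}\binom{d}{d-1}=(-1)^{d-1}d\pmod{p}$; since $p$ is odd, $d\not\equiv 0\pmod p$, so both are units, establishing $|G_1((0,1);1)|_p=1$. By the symmetry $z_1\leftrightarrow z_2$ (which interchanges $g_1$ and $g_2$), the same computation at $(1,0)$ yields $|G_2((1,0);1)|_p=1$. At $(1,1)$ the sums become Vandermonde sums: $h((1,1);d)\equiv(-1)^d\binom{2d}{d}=(-1)^d\binom{p-1}{d}\equiv(-1)^d\cdot(-1)^d=1\pmod{p}$, using the standard congruence $\binom{p-1}{k}\equiv(-1)^k\pmod{p}$, and similarly $g_1((1,1);d)\equiv(-1)^{d-1}\binom{p-2}{d-1}\equiv d\pmod{p}$, again a unit. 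So $|G_1((1,1);1)|_p=1$.

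There is essentially no obstacle here: the entire proof is a finite computation with binomial coefficients modulo~$p$, and the only things to watch are (a) that $p$ is odd so $d$ is a well-defined integer and a unit mod $p$, and (b) that $(0,1),(1,0),(1,1)$ are their own Teichm\"uller lifts, so the unit-disc condition $\aa\in(\Z_p^{(m)})^2$ is trivially satisfied. Combining the three evaluations with the definition of $\frak D^{(m)}[\La]$ concludes the proof.
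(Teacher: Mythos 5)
Your proof is correct and follows essentially the same route as the paper: a direct evaluation at the three points. The paper computes $T_s(\aa;1)$, $I_{s,j}(\aa;1)$ and the limit vectors $\mc I(\aa;1)$ explicitly, while you evaluate the factors $h(\zz;d)$, $g_j(\zz;d)$ entering the definition of $\frak D^{(m)}_*(1)$; at these Frobenius-fixed points the two computations agree modulo $p$ via Lemmas \ref{lem 3.4} and \ref{lem 3.5}, so the arguments are interchangeable.
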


\begin{proof}
Straightforward calculation shows that
$T_s\big(0,1;1\big)=(-1)^{(p^s-1)/2}$,
$I_{s,1}\big(0,1;1\big)=$
\\ $(-1)^{(p^s-3)/2}$,
$I_{s,2}\big(0,1;1\big)=(-1)^{(p^s-3)/2}\frac{p^s-1}2$.
Hence $\mc I\big(0,1;1\big) = \big(-1,\frac12\big)$.
Similarly we obtain $\mc I\big(1,0;1\big) = \big(\frac12,-1\big)$,
$\mc I\big(1,1;1\big) = \big(-\frac12,-\frac12\big)$.
These vectors are nonzero modulo $p$.
\end{proof}

By Lemma \ref{lem sp}, the analytic vector-function $\mc I(\zz;1)$ is nonzero at 
$(0,1;1)$, and its values generate the line bundle $\mc L$  over a neighborhood of $(0,1)$ in 
$\frak D^{(m)}_*(1)$. Over a neighborhood of $(0,1)$, 
the   same line bundle can be defined differently.  Consider the family of
 elliptic curves $X(\zz)$  defined by  the equation
$y^2 =t(t-z_1)(t-z_2)$. If the parameter $(z_1,z_2)$ is close to $(0,1)$ the curve
$X(\zz)$ has a vanishing cycle denoted by $C_{0,1}$. The vector-function
\bean
\label{Ic}
I^{(C_{0,1})}(\zz;1)\ :=\ \int_{C_{0,1}}  \Big(\frac {1}{(t-z_1)y}\,,\ \frac {1}{(t-z_2)y}\Big)dt 
\eean
is holomorphic at $(0,1)$, solves dynamical equations \eqref{dy} 
for $\la=1$,  and $I^{(C)}(0,1;1)\ne 0$. 
The values of $I^{(C)}\big(\zz;1\big)$ generate a line bundle denoted by
$\mc L_{0,1}$ over a neighborhood of $(0,1)$. The line bundle $\mc L_{0,1}$ is invariant
with respect to the dynamical connection. The dynamical connection for $\la=1$
does not have other invariant proper nontrivial subbundles near $(0,1)$ since other
solutions of equations  \eqref{dy} for $\la=1$ at (0,1) include $\log z_1$. 
Hence our line bundle $\mc L$ coincides with the line bundle
$\mc L_{0,1}$ over a neighborhood of $(0,1)\subset \frak D^{(m)}_*(1)$.

\vsk.2>

Similarly, the elliptic curve $X(\zz)$  has a vanishing cycle denoted by $C_{1,0}$
if the parameter $(z_1,z_2)$ is close to $(1,0)$. The values of the nonzero vector-function
\bea
I^{(C_{1,0})}(\zz;1)\ :=\ \int_{C_{1,0}}  \Big(\frac {1}{(t-z_1)y}\,,\ \frac {1}{(t-z_2)y}\Big)dt 
\eea
 generate a line bundle denoted by
$\mc L_{1,0}$ over a neighborhood of $(1,0)$. 
Our line bundle $\mc L$ coincides with the line bundle
$\mc L_{1,0}$ over a neighborhood of  $(1,0)$ in $\frak D^{(m)}_*(1)$.

\vsk.2>
Also,
the elliptic curve $X(\zz)$  has a vanishing cycle denoted by $C_{1,1}$
if the parameter $(z_1,z_2)$ is close to $(1,1)$. Then the values of the nonzero vector-function
\bea
I^{(C_{1,1})}(\zz;1)\ :=\ \int_{C_{1,1}}  \Big(\frac {1}{(t-z_1)y}\,,\ \frac {1}{(t-z_2)y}\Big)dt 
\eea
 generate a line bundle 
$\mc L_{1,1}$ over a neighborhood of $(1,1)$. 
Our line bundle $\mc L$ coincides with the line bundle
$\mc L_{1,1}$ over a neighborhood of $(1,1)$ in $\frak D^{(m)}_*(1)$.

\vsk.2>
Thus our global line bundle $\mc L$ extends over the field  $\Q^{(m)}$ the three local line bundles
$\mc L_{0,1}$, $\mc L_{1,0}$, $\mc L_{1,1}$,
each defined by integrals over the cycles vanishing  at different places.
This $p$-adic phenomenon was observed by B.\,Dwork in a different context, see \cite{Dw}  and also 
\cite[Appendix]{V2}, \cite{VZ1}. The corresponding  global line bundle was called 
a {\it $p$-cycle} in \cite{Dw}. 

\vsk.2>
The operator $\tau$ of the  \qKZ/ difference connection identifies solutions of the dynamical equations
\eqref{dy} with parameter $\la$ with solutions of dynamical equations with parameter $\la+2$.
Hence our line bundle $\mc L$ over a neighborhood of the point $(0,1)\in \frak D^{(m)}_*(1+2k)$,
$k\in\Z$, corresponds to the line bundle generated by the vector-function
\bean
\label{Ik}
I^{(C_{0,1})}(\zz;1+2k)=\int_{C_{0,1}}  \Big(\frac {1}{t^k(t-z_1)y}\,,\ \frac {1}{t^k(t-z_2)y}\Big)dt .
\eean
The vector-valued functions
\bea
I^{(C_{1,0})}(\zz;1+2k)=\int_{C_{1,0}}  \Big(\frac {1}{t^k(t-z_1)y}\,,\ \frac {1}{t^k(t-z_2)y}\Big)dt ,
\\
I^{(C_{1,1})}(\zz;1+2k)=\int_{C_{1,1}}  \Big(\frac {1}{t^k(t-z_1)y}\,,\ \frac {1}{t^k(t-z_2)y}\Big)dt .
\eea
play similar roles in neighborhoods of points $(1,0)$ and $(1,1)$, respectively.

\subsection{Monodromy}
\label{sec last}
For an odd integer $\la$, the differential operators $\mc D_i(\la)$,\, $i=1,2$, define a flat dynamical
connection on the trivial bundle
$\C^2\times \C^2 \to \C^2$.
The flat sections of the connection have the  form
\bea
I^{(C)}(\zz;\la,\mu)= \int_C
t^{-\la/2}(t-z_1)^{-1/2}
(t-z_2)^{-1/2} \Big(\frac {1}{t-z_1}, \frac {1}{t-z_2}\Big)dt,
\eea
see \eqref{IC}. The monodromy of this connection does not depend on the choice of the odd integer $\la$ and is isomorphic
to the monodromy of the Gauss-Manin connection on the bundle with fibers being the first homology groups
 of elliptic curves $X(\zz)$ of the family defined by the equation
% \linebreak
$y^2 =t(t-z_1)(t-z_2)$. It is classically known that this monodromy is irreducible. Hence the dynamical connection 
defined by $\mc D_i(\la)$,\, $i=1,2$, over the field of complex numbers has no invariant line
subbundles. Thus the presence of our line subbundle 
$\mc L  \,\to\,  \frak D^{(m)}[\La]$,  invariant with respect to the dynamical and  \qKZ/ connections,
is a specific $p$-adic  feature.

\bigskip

\end{document}